\newtheorem{lemma}{Lemma}
\newtheorem{teor}{Theorem}
\newtheorem{prop}{Proposition}
\newtheorem{cor}{Corollary}
\DeclareMathOperator{\PSL}{PSL}
\DeclareMathOperator{\PGL}{PGL}
\DeclareMathOperator{\SL}{SL}
\DeclareMathOperator{\GL}{GL}
\begin{document}

\title{Group partitions of minimal size}
\date{}

\author{Martino Garonzi}
\address[Martino Garonzi]{Departamento de Matem\'atica, Universidade de Bras\'ilia, Campus Universit\'ario Darcy Ribeiro, Bras\'ilia-DF, 70910-900, Brazil}
\email{mgaronzi@gmail.com}
\thanks{The authors acknowledge the support of the Funda\c{c}\~{a}o de Apoio \`a Pesquisa do Distrito Federal (FAPDF) and the Coordena\c{c}\~{a}o de Aperfei\c{c}oamento de Pessoal de N\'ivel Superior - Brasil (CAPES) - Finance Code 001.}

\author{Michell Lucena Dias}
\address[Michell Lucena Dias]{Departamento de Matem\'atica, Universidade de Bras\'ilia, Campus Universit\'ario Darcy Ribeiro, Bras\'ilia-DF, 70910-900, Brazil}
\email{M.L.Dias@mat.unb.br}

\begin{abstract}
A cover of a finite group $G$ is a family of proper subgroups of $G$ whose union is $G$, and a cover is called minimal if it is a cover of minimal cardinality. A partition of $G$ is a cover such that the intersection of any two of its members is $\{1\}$. In this paper we determine all finite groups that admit a minimal cover that is also a partition. We prove that this happens if and only if $G$ is isomorphic to $C_p \times C_p$ for some prime $p$ or to a Frobenius group with Frobenius kernel being an abelian minimal normal subgroup and Frobenius complement cyclic.
\end{abstract}

\maketitle

\tableofcontents

\section{Introduction}

In this paper all groups considered will be finite. Let $G$ be a group. A cover of $G$ is a list of proper subgroups $H_1,\ldots,H_n$ of $G$ with the property that $H_1 \cup \ldots \cup H_n = G$. A cover is called a partition of $G$ if $H_i \cap H_j = \{1\}$ whenever $i \neq j$. Every non-cyclic group admits a cover but this is not true for partitions. A group is called partitionable if it admits a partition. Partitionable groups were classified by Baer, Kegel and Suzuki in 1961 (see \cite{zappa} for a good account of how the proof was completed).

\ 

Baer, Kegel and Suzuki proved that a group $G$ is partitionable if and only if it is isomorphic to one of:

\begin{enumerate}

\item $S_4$ (the symmetric group of degree $4$),

\item a $p$-group with $H_p(G) \neq G$, where $H_p(G) = \langle x \in G\ :\ x^p \neq 1 \rangle$,

\item a group of Hughes-Thompson type,

\item a Frobenius group,

\item $\PSL_2(p^m)$ with $p^m \geq 4$,

\item $\PGL_2(p^m)$ with $p^m \geq 5$ and $p$ odd,

\item the Suzuki group $Sz(2^{2m+1})$ where $m \geq 1$,

\end{enumerate}

for some prime $p$ and some $m \in \mathbb{N}$.

\ 

Following \cite{cohn} we use the notation $\sigma(G)$ for the minimal size of a cover of $G$, with the convention that $\sigma(G)=\infty$ if $G$ is cyclic. The invariant $\sigma(G)$ has received a lot of interest in recent years. In particular, several new and powerful techniques to study $\sigma(G)$ were introduced in \cite{cubo}. A new and interesting trend is the comparison of group cover types. The authors of \cite{irrcov} classify those groups $G$ in which all the inclusion-minimal covers (covers with no proper subcovers) are size-minimal (that is, have size $\sigma(G)$). Following \cite{sizemore}, if $G$ is partitionable we use the notation $\rho(G)$ for the minimal size of a partition of $G$. Clearly, $\sigma(G)$ is always at most $\rho(G)$. A partition is called normal if it is closed under conjugation; a lot of work has been done by Baer to characterize the normal partitions, being this one of the main ideas of the classification of partitionable groups, however observe that the partitions of a group $G$ of size $\rho(G)$ in general are not normal. T. Foguel and N. Sizemore in \cite{fs} and \cite{sizemore} have raised the interesting problem of studying when equality occurs, and this is precisely what we do in this paper. We are interested in classifying the partitionable groups $G$ with $\sigma(G)=\rho(G)$. Such equality is equivalent to the statement that $G$ admits a minimal cover which is also a partition. In this paper we essentially compare the two invariants $\sigma(G)$ and $\rho(G)$. Our main theorem is the following.

\begin{teor} \label{main}
Let $G$ be a partitionable group. Then $\sigma(G)=\rho(G)$ if and only if $G \cong C_p \times C_p$ for some prime $p$ or $G$ is a Frobenius group with Frobenius kernel being an abelian minimal normal subgroup and Frobenius complement cyclic.
\end{teor}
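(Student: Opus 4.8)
The plan is to run through the Baer--Kegel--Suzuki classification of partitionable groups and, for each of the seven types, decide when $\sigma(G)=\rho(G)$. The key preliminary is a lemma on Frobenius groups: \emph{if $G=K\rtimes C$ is a Frobenius group with kernel $K$, then $\rho(G)=|K|+1$}. The upper bound is clear, since $\{K\}\cup\{C^{g}:g\in G\}$ is a partition of size $|K|+1$. For the lower bound one analyses an arbitrary partition $\Pi$: putting $N_{H}=H\cap K$ for $H\in\Pi$, each member is either contained in $K$, or is a $\pi'$-subgroup of a single conjugate of $C$ (where $\pi$ is the set of prime divisors of $|K|$), or is itself a Frobenius group $N_{H}\rtimes L$ with $1\ne N_{H}\le K$; in that last case $H$ meets exactly $|N_{H}|$ distinct conjugates of $C$, intersecting each in a conjugate of $L$. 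Since the $N_{H}$ partition $K$, a bookkeeping of how the $|K|$ conjugates of $C$ are distributed among the members forces $|\Pi|\ge|K|+1$. I expect establishing this bound to be the main obstacle of the proof.

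For the ``if'' direction: in $G=C_{p}\times C_{p}$ the $p+1$ subgroups of order $p$ partition $G$, and since no $p$ proper subgroups can cover $G$ this is a minimal cover, so $\sigma(G)=\rho(G)=p+1$. Now let $G=K\rtimes C$ be Frobenius with $K$ an abelian minimal normal subgroup (hence an irreducible $\mathbb{F}_{p}C$-module) and $C$ cyclic of order $m$. An element $x$ of order $m$ lies in a unique conjugate $C^{g}$ of $C$, and then $\langle x\rangle=C^{g}$; if $H<G$ is proper and contains $x$, then $H\cap K$ is a $C^{g}$-submodule of $K$, hence $\{1\}$ (it cannot be $K$, else $H\supseteq KC^{g}=G$), so $H=C^{g}$. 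Thus every cover of $G$ must contain all $|K|$ conjugates of $C$, together with at least one further subgroup to cover $K$, so $\sigma(G)\ge|K|+1$; with the lemma this gives $\sigma(G)=\rho(G)=|K|+1$.

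For the ``only if'' direction we treat the types in turn. \emph{$S_{4}$:} one checks $\sigma(S_{4})=4$, while a partition of $S_{4}$ contains neither $A_{4}$ nor a Sylow $2$-subgroup and at most one copy of $S_{3}$, so $\rho(S_{4})\ge 7$. \emph{Partitionable $p$-groups:} here $\sigma(G)=p+1$; if also $\rho(G)=p+1$, a size-$(p+1)$ partition $\{H_{1},\dots,H_{p+1}\}$ satisfies $\sum_{i}(|H_{i}|-1)=|G|-1$, its largest member must have index $p$, and the remaining $p$ members, meeting it trivially, have order $p$, which forces $|G|=p^{2}$, hence $G\cong C_{p}\times C_{p}$. \emph{Hughes--Thompson type:} $H:=H_{p}(G)$ is nilpotent with $[G:H]=p$, and $\{H\}\cup\{\langle x\rangle:x\notin H\}$ is a partition of size $|H|+1=\rho(G)$; if $p\mid|H|$ then $\{H\}\cup\{\text{Sylow }p\text{-subgroups of }G\}$ is a cover of size $1+n_{p}$, where $n_{p}$ divides the $p'$-part of $|H|$, which is $<|H|$, so $\sigma(G)<\rho(G)$; while if $H$ is a $p'$-group then $G$ is Frobenius with complement $C_{p}$, handled next. \emph{Frobenius groups not of the stated form:} if $C$ is non-cyclic then, as $C$ acts freely on $K\setminus\{1\}$ we have $|C|<|K|$, whence $\sigma(G)\le\sigma(G/K)=\sigma(C)\le|C|-1<|K|+1=\rho(G)$; if $C$ is cyclic but $K$ is not a minimal normal subgroup, choose $N$ maximal among proper $G$-invariant subgroups of $K$, so $G/N$ is Frobenius with cyclic complement and minimal normal kernel, giving $\sigma(G)\le\sigma(G/N)=|K/N|+1<|K|+1=\rho(G)$. \emph{$\PSL_{2}(q)$, $\PGL_{2}(q)$, $Sz(q)$:} here one compares the known values of $\sigma$ with $\rho(\PSL_{2}(q))=\rho(\PGL_{2}(q))=q^{2}+q+1$ and $\rho(Sz(q))=q^{4}+q^{2}+1$; these values of $\rho$ come from the partition into unipotent and maximal-torus subgroups, which is essentially forced since those subgroups are trivial-intersection and no larger subgroup can lie in a partition (the complement in $G$ of such a subgroup could not be covered by the forced smaller ones). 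In every case $\sigma<\rho$. Assembling the seven cases proves the theorem.
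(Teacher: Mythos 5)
Your overall plan---running through the Baer--Kegel--Suzuki list---matches the paper's, but the argument rests on a lemma that is not proved and is in fact highlighted by the paper as an open problem: whether $\rho(G)=|K|+1$ for \emph{every} Frobenius group $G$ with kernel $K$. Your entire treatment of the Frobenius case (both directions: non-cyclic complement, non-minimal kernel, and part of the ``if'' direction) quotes this equality, but your justification of the lower bound $|\Pi|\geq |K|+1$ is a one-sentence ``bookkeeping'' sketch; you yourself flag it as ``the main obstacle,'' and it is exactly the obstacle you have not overcome. The paper avoids this entirely: assuming $\sigma(G)=\rho(G)$, it first forces $G$ to be solvable (via Zassenhaus's classification of non-solvable Frobenius complements together with the elementary bound $\rho(G)\geq 1+\sqrt{|G|}$, a case your proposal never addresses), then combines Tomkinson's formula $\sigma(G)=q+1$ with $\rho(G)\geq 1+\sqrt{|G|}$ to get $|G|\leq q^2$, and from there pins down the chief-factor structure, $P=K$, cyclicity of $T$, and finally $B=\{1\}$ by a Frattini argument. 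No computation of $\rho$ for a general Frobenius group is ever needed. Until you supply a genuine proof of your lemma (or reroute as the paper does), the core of your argument is missing.

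A second concrete error: your claimed values $\rho(\PSL_2(q))=\rho(\PGL_2(q))=q^2+q+1$ and the assertion that the torus-plus-unipotent partition is ``essentially forced'' are wrong. A point stabilizer (Borel subgroup) \emph{can} belong to a partition, replacing one Sylow subgroup and the $q$ tori it contains, and the paper shows the true value is $q^2+1$. The inequality $\rho>\sigma$ you need still holds, but your lower-bound argument for it is unsound as stated, and the analogous ``forced partition'' claim for $Sz(q)$ is likewise unjustified (the paper only proves $\rho(Sz(q))\geq\sigma(Sz(q))+q^2-1$, not an exact value). Finally, in the Hughes--Thompson case you assert $\rho(G)=|H_p(G)|+1$ but only exhibit the partition giving the upper bound; the inequality you actually need is the lower bound, which requires showing $H_p(G)$ lies in every partition (the paper does this by showing that otherwise $H_p(G)$ is a partitionable nilpotent group, forcing $G$ to be Frobenius).
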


Let $q=p^n \geq 4$ be a power of the prime $p$, and consider the partitionable projective groups $G=\PSL_2(q)$ with $q \geq 4$, and $G=\PGL_2(q)$ with $q \geq 5$ odd. Bryce, Fedri and Serena \cite{bfs} showed that $\sigma(\PSL_2(5))=10$, $\sigma(\PSL_2(7))=15$, $\sigma(\PSL_2(9))=16$ and in all other cases $$\sigma(\PSL_2(q))=\sigma(\PGL_2(q))= \left \{ \begin{array}{cc} \frac{1}{2}q(q+1) & \mbox{if q is even,} \\ \frac{1}{2}q(q+1)+1 & \mbox{if q is odd.} \end{array} \right.$$ The following result, combined with the discussion above, implies that $\rho(G) > \sigma(G)$ for $G= \PSL_2(q)$, $\PGL_2(q)$.

\begin{teor} \label{linear}
Let $q \geq 3$ be a prime power. Then $\rho(\PGL_2(q))=q^2+1$, furthermore $\rho(\PSL_2(q))=q^2+1$ if $q \geq 7$.
\end{teor}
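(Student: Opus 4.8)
The plan is to exploit the well-known partition of $\PGL_2(q)$ (and of $\PSL_2(q)$) coming from its $2$-transitive action on the projective line. Recall that in $G = \PGL_2(q)$ the nontrivial elements split into three types: unipotent elements (fixing exactly one point of $\mathbb{P}^1(\mathbb{F}_q)$), split semisimple elements (fixing two points), and nonsplit semisimple elements (fixing no point, i.e.\ contained in a nonsplit torus). This yields a \emph{normal} partition of $G$ whose parts are: the $q+1$ conjugates of a Sylow $p$-subgroup $U$ (a point stabilizer contains a unique such), the $\binom{q+1}{2}$ conjugates of a split torus $T_s$ of order $q-1$, and the $q(q-1)/2$ conjugates of a nonsplit torus $T_n$ of order $q+1$. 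Counting the parts gives $(q+1) + \binom{q+1}{2} + q(q-1)/2 = q^2 + 1$. So the first thing I would do is verify this is genuinely a partition (any two distinct maximal tori, or a torus and a Sylow $p$-subgroup, or two distinct Sylow $p$-subgroups, intersect trivially — this is standard $\PGL_2$ geometry) and record that it has exactly $q^2+1$ members, establishing $\rho(\PGL_2(q)) \le q^2+1$; the analogous computation for $\PSL_2(q)$ needs a short case split on the parity of $q$ but again lands on $q^2+1$.

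The substance of the proof is the lower bound $\rho(G) \ge q^2+1$. Here I would argue as follows. In any partition of $G$, each part is a proper subgroup, hence has order at most that of a maximal subgroup; the maximal subgroups of $\PGL_2(q)$ are classified (Dickson), and apart from the Borel subgroup $B$ of order $q(q-1)$, the tori-normalizers of orders $2(q-1)$ and $2(q+1)$, and a bounded number of "small" subgroups ($A_4$, $S_4$, $A_5$, $\PGL_2(q_0)$ for subfields), the only subgroups that can contribute many nontrivial elements are conjugates of $B$. The key counting inequality is obtained by partitioning the nontrivial elements by type. A subgroup of order coprime to $p$ that lies in a partition contributes only semisimple elements and, crucially, a single cyclic subgroup of a torus; since the split torus has order $q-1$ and the nonsplit $q+1$, and since the $q(q-1)/2$ nonsplit-torus elements must each be covered, one needs at least $q(q-1)/(2\cdot q) $... — more precisely, I would count that the $q^2-1$ ... \emph{Let me restate the counting cleanly:} the number of elements of order $p$ is $q^2-1$, and a single part can contain at most $q-1$ of them (a part meeting the unipotent elements lies in some Borel and its unipotent radical has order $q$, giving $q-1$ nontrivial unipotents, and a part cannot contain unipotents from two different Borels since distinct Sylow $p$-subgroups intersect trivially while a non-$p$-subgroup meeting unipotents... ) — hence at least $q+1$ parts are needed just for the unipotent elements; similarly the $\binom{q+1}{2}(q-2)$ split-semisimple elements force at least $\binom{q+1}{2}$ further parts (each split torus is self-centralizing, so a part containing a generator of a split torus is contained in that torus's normalizer and meets that torus's conjugacy class in at most $q-2$ elements), and the $\binom{q}{2}$-worth of nonsplit tori force $q(q-1)/2$ more; adding up gives $\ge q^2+1$.

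Concretely the lower bound argument runs: let $\mathcal{P}$ be a partition. Each Sylow $p$-subgroup, each split torus and each nonsplit torus is a maximal intersection-trivial "block", so each must be a union of parts of $\mathcal{P}$ restricted... — more cleanly, since every nontrivial $p$-element lies in a unique Sylow $p$-subgroup and in a unique part, and a part containing a $p$-element is a $p$-subgroup (if it had order divisible by two primes it could not be intersection-trivial with the Sylow subgroup it meets unless...), we get that the $q+1$ Sylow $p$-subgroups are each partitioned by parts of $\mathcal{P}$, but each is cyclic of order $p$ when $n=1$ and in general elementary abelian of order $q$; being a $p$-group with a nontrivial partition forces... actually the slick route is: the parts of $\mathcal{P}$ induce a partition on $G$, so $|G| - 1 = \sum_{H \in \mathcal{P}} (|H|-1)$, i.e.\ $q(q-1)(q+1) = \sum_{H\in\mathcal P}(|H|-1)$, and since $|H| - 1 \le q(q-1) - 1$ with equality only for $H$ a Borel, while a Borel itself is not intersection-trivial with its sub-tori-conjugates... — I would combine this sum identity with the fact that distinct parts intersect trivially and with the subgroup structure to force $|\mathcal{P}| \ge q^2+1$, the extremal configuration being exactly the torus/unipotent partition above.

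I expect the main obstacle to be the lower bound, specifically ruling out "mixed" economical partitions that use large subgroups like Borel subgroups or $\PGL_2(q_0)$ over subfields to cover many elements at once. The cleanest tool is the identity $|G| - 1 = \sum_{H \in \mathcal{P}}(|H|-1)$ together with a careful accounting of which elements each admissible maximal subgroup can legally contribute, using that every Borel $B$ contains its unipotent radical plus $q+1$... no, $q-1$ split tori that pairwise intersect in $\{1\}$, so $B$ cannot itself be a single part of a partition together with covering its own semisimple elements — meaning the "efficient" parts are forced to be the tori and unipotent radicals themselves. Handling $\PSL_2(q)$ requires redoing this with the correct torus orders $(q\pm1)/\gcd(2,q-1)$ and checking the small cases $q \in \{3,5\}$ are correctly excluded (hence the hypothesis $q \ge 7$ for $\PSL_2$, while $q \ge 3$ suffices for $\PGL_2$).
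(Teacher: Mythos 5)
There is a genuine gap, and it starts with the arithmetic at the very first step. The ``normal'' partition you propose --- all $q+1$ Sylow $p$-subgroups, all $\binom{q+1}{2}$ split tori, all $q(q-1)/2$ nonsplit tori --- has
$(q+1)+\tbinom{q+1}{2}+\tfrac{q(q-1)}{2}=q^2+q+1$
members, not $q^2+1$. So your construction does not establish the upper bound $\rho(\PGL_2(q))\le q^2+1$. The minimal partition is \emph{not} the normal one: to save $q$ parts one must include a whole point stabilizer (Borel subgroup) $H\cong {C_p}^f\rtimes C_{q-1}$ as a single member --- it is itself a Frobenius group, so it replaces the one Sylow $p$-subgroup and the $q$ split tori it contains. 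The paper's partition is: one Borel $H$, the $\tfrac{q(q+1)}{2}-q$ split tori not in $H$, all $\tfrac{q(q-1)}{2}$ Singer cycles, and the $q$ Sylow $p$-subgroups not in $H$, totalling $q^2+1$. The same oversight sinks your lower bound: adding the three counts $q+1$, $\binom{q+1}{2}$, $\tfrac{q(q-1)}{2}$ ``for each element type separately'' would give $\ge q^2+q+1$, which is false --- precisely because a single part (a Borel) can simultaneously cover unipotent and split semisimple elements, and your intermediate claim that ``a part containing a $p$-element is a $p$-subgroup'' is refuted by the Borel itself. Any correct lower bound must split into cases according to whether the partition contains a point stabilizer, and in that case use the constraint (Lemma \ref{trick}) that every other part has order at most $|G:H|=q+1$, which is what forces the remaining elements to be covered by individual cyclic tori.

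Beyond this, the small cases are underestimated. The generic counting argument for $\PSL_2(q)$ only works for $q\ge 13$ (for smaller $q$ the subgroups $A_4$, $S_4$, $A_5$ and subfield subgroups can contain elements of order $(q\pm1)/2$, and for even $q$ one needs separate facts about the dihedral normalizers, e.g.\ that any two subgroups of type $D_{2(q+1)}$ intersect nontrivially, so at most one can lie in a partition). The cases $q=5,7,9,11$ for $\PSL_2$ and $q=3,5$ for $\PGL_2$ (i.e.\ $S_4$ and $S_5$) each require a bespoke analysis of element orders and maximal subgroups; your plan only flags $q\in\{3,5\}$ and treats them as exclusions rather than as cases where the equality $\rho(\PGL_2(q))=q^2+1$ must still be proved. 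Finally, note that the theorem's $\PGL_2(q)$ claim for $q\ge 7$ is most cheaply obtained from the $\PSL_2(q)$ lower bound via monotonicity of $\rho$ under subgroups (Corollary \ref{embed}), rather than by redoing the count in $\PGL_2(q)$.
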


This is proved in Sections \ref{sects4} and \ref{sectlin}. Note that this includes all projective partitionable groups because $\PSL_2(q) \cong \PGL_2(q)$ if $q$ is even and $\PSL_2(5) \cong \PSL_2(4)$.

\ 

The paper is organized as follows. In Section \ref{general} we list some general properties of covers and partitions, and in the subsequent sections we deal with each individual family of the classification of partitionable groups by Baer, Kegel and Suzuki. In Section \ref{sectpgps} we also compute $\rho(G)$ when $G$ is elementary abelian (this was done in \cite{fs} but our techniques are different), and in Section \ref{sectht} we compute $\rho(G)$ when $G$ is a group of Hughes-Thompson type and not a Frobenius group. Suzuki groups are treated in Section \ref{sectsuz}.

\ 

An interesting question is the following: is it true that if $G$ is a Frobenius group then $\rho(G)=|K|+1$, where $K$ is the Frobenius kernel of $G$? Some work has been done in this direction, however we still do not have a definite answer.

\section{General remarks} \label{general}

Since any cover of a quotient $G/N$ of a group $G$ can be lifted to a cover of $G$ we have the basic inequality $\sigma(G) \leq \sigma(G/N)$ whenever $N \unlhd G$. Tomkinson in \cite{tom} computed $\sigma(G)$ for every solvable group $G$. Recall that a chief factor of $G$ is a minimal normal subgroup of a quotient of $G$, and a complement of a chief factor $L/N$ of $G$ (where $N \unlhd G$) is a subgroup $K/N$ of $G/N$ with the property that $KL=G$ and $K \cap L = N$.

\begin{teor}[Tomkinson \cite{tom}]
Let $G$ be a solvable group. Then $\sigma(G)=q+1$ where $q$ is the smallest order of a chief factor of $G$ with more than one complement.
\end{teor}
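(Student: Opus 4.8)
The assertion splits into $\sigma(G)\le q+1$ and $\sigma(G)\ge q+1$, and the plan is to prove each by induction on $|G|$. For a group $H$ write $q(H)$ for the smallest order of a chief factor of $H$ with more than one complement, with the convention $q(H)=\infty$ if there is none; the base case is $H$ cyclic, where every chief factor has prime order and, subgroups of cyclic groups being unique of each order, a unique complement, so $q(H)=\infty$ and $\sigma(H)=\infty$. Two facts will be used repeatedly: $\sigma(H)\le\sigma(H/M)$ for every $M\unlhd H$, and a cover of minimal size may be assumed to consist of maximal subgroups (replace each member by a maximal overgroup; a repetition would yield a shorter cover). The crucial bookkeeping point is that a chief factor of $H/M$ is exactly a chief factor $L/N$ of $H$ with $M\le N$, and it has the same complements in $H/M$ as in $H$; hence the set of chief factors with more than one complement only shrinks under passing to a quotient, so $q(H/M)\ge q(H)$.

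\textbf{Lower bound.} Let $M_1,\dots,M_n$ be a minimal cover of $G$ by maximal subgroups and set $N=\bigcap_i\mathrm{core}_G(M_i)$. If $N\ne 1$, then $M_1/N,\dots,M_n/N$ is again a minimal cover of $G/N$ of size $n$, so $\sigma(G)=\sigma(G/N)=q(G/N)+1\ge q(G)+1$ by induction and monotonicity; so assume $N=1$. Pick a minimal normal subgroup $A$; since $A\ne 1$, some $M_i$ does not contain $A$, and then $M_iA=G$ by maximality while $M_i\cap A\unlhd G$ is a proper subgroup of $A$, hence trivial, so $M_i$ is a complement of $A$. If only one $M_i$ complemented $A$, each other $M_j$ would contain $A$, hence be a union of $A$-cosets; but each $A$-coset meets $M_i$ in a single point and therefore has an element outside $M_i$, which lies in some $M_j$ ($j\ne i$) and drags the whole coset into $M_j$ — making $\{M_j:j\ne i\}$ a shorter cover, a contradiction. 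So $A$ has at least two complements among the $M_i$, hence $|A|\ge q(G)$. Now either the $M_j$ containing $A$ already cover $G/A$ — giving a cover of $G/A$ of size $\le n$, so $n\ge\sigma(G/A)=q(G/A)+1\ge q(G)+1$ by induction — or some $A$-coset $Ag$ misses every $M_j$ that contains $A$, so its $|A|$ elements are covered by the complements among the $M_i$, each contributing exactly one element of $Ag$; this needs at least $|A|$ complements, plus at least one further $M_j\supseteq A$ to cover $A\setminus\{1\}$, so $n\ge|A|+1\ge q(G)+1$.

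\textbf{Upper bound.} Choose a chief factor $L/N$ of minimal order $q=q(G)$ with more than one complement; it suffices to find a cover of $\bar G:=G/N$ of size $q+1$. Write $V=L/N$, a minimal normal (hence elementary abelian) subgroup of $\bar G$ of order $q$ with more than one complement. The plan rests on the structural claim that $L/N$ can be chosen so that either (i) $V$ is a central subgroup of prime order $q$, or (ii) $\bar G$ is a Frobenius group with kernel $V$. Granting this, we are done: in case (i), $\bar G=V\times K$ for a complement $K$, and since $V\cong C_q$ has a second complement, $K$ admits a nontrivial homomorphism onto $V$, so $\bar G$ maps onto $C_q\times C_q$, which is covered by its $q+1$ subgroups of order $q$; in case (ii), the kernel $V$ together with the $q$ conjugates of a complement form a partition of $\bar G$ of size $q+1$, so $\sigma(\bar G)\le\rho(\bar G)\le q+1$. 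Either way $\sigma(G)\le\sigma(\bar G)\le q+1$.

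\textbf{Main obstacle.} Everything hinges on the structural claim, which I expect to be the hard part. If $C_{\bar G}(V)\supsetneq V$, the plan is to use the extra centralized part to produce a chief factor of order $<q$ still having more than one complement, contradicting minimality of $q$. If $C_{\bar G}(V)=V$ and the action is nontrivial, then a complement $K\cong\bar G/V$ embeds faithfully and irreducibly in $\GL(V)$; when $K$ is cyclic it acts fixed-point-freely (by Schur's lemma $K$ embeds in a multiplicative group, and $k-1$ is invertible for $k\ne 1$), so $\bar G$ is already Frobenius, and one is reduced to excluding a non-cyclic such $K$. Applying the theorem inductively to $K$ (which is smaller) yields a chief factor of $\bar G$ strictly above $V$ with more than one complement, whose order is forced to be $\ge q$ by minimality; converting this into either a contradiction or a replacement of $L/N$ by a chief factor whose quotient is genuinely Frobenius is the delicate step. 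I would expect it to come down to showing that a faithful irreducible solvable linear group $K\le\GL(V)$ cannot satisfy $\sigma(K)\ge|V|+2$ — equivalently, that such a $K$ always has a small chief factor (of order $<|V|$) with more than one complement — which is the technical core of the whole argument.
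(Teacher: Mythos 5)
This theorem is not proved in the paper at all: it is imported from Tomkinson's article \cite{tom} and used as a black box, so there is no internal proof to compare against; your attempt has to be judged on its own. Your lower bound $\sigma(G)\ge q+1$ is complete and correct: the reduction to a core-free minimal cover by maximal subgroups, the fact that a maximal subgroup not containing a minimal normal subgroup $A$ of a solvable group complements it, the argument forcing at least two of the $M_i$ to be complements of $A$, and the coset count giving $n\ge |A|+1$ are all sound, and this is essentially the standard argument.

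The upper bound has a genuine gap, which you have located yourself: everything rests on the structural claim that the minimal chief factor $L/N$ can be chosen so that $G/N$ is Frobenius with kernel $L/N$ or $L/N$ is central of prime order, and this is nowhere proved. Two concrete problems with the route you sketch. First, your treatment of the case $C_{\bar G}(V)\supsetneq V$ (``produce a chief factor of order $<q$ with more than one complement'') fails already for $G=C_3\times S_3$: taking $V$ to be the non-central $C_3$, its centralizer is $C_3\times C_3\supsetneq V$, yet $G$ has no chief factor of order $<3$ with more than one complement. The correct move in this case is not a smaller chief factor but a different one of the \emph{same} order: with $K$ a complement of $V$ and $D=C_{\bar G}(V)\cap K$ (which is normal in $\bar G$), pass to $\bar G/D$, where the image of $V$ is a self-centralizing chief factor of order $q$ that still has more than one complement. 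Second, in the self-centralizing case the dichotomy ``Frobenius or contradiction'' is too rigid. Write $\bar G=V\rtimes U$ with $U$ faithful and irreducible on $V$, and set $U_0=\langle u\in U\ :\ C_V(u)\neq 1\rangle$, a normal subgroup of $U$. If $u\notin U_0$ then $C_V(u)=1$, so $[V,u]=V$ and every element of the coset $uV$ is a $V$-conjugate of $u$; hence $\{VU_0\}\cup\{U^v\ :\ v\in V\}$ is a cover of $\bar G$ of size $q+1$ whenever $U_0\neq U$, regardless of whether $U_0=1$ (the Frobenius case). So the statement that actually needs proof is: if $q$ is minimal and $V$ is self-centralizing, then $U_0\neq U$ --- equivalently, $U_0=U$ forces a chief factor of order $<|V|$ with more than one complement. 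This is exactly the assertion you defer as ``the technical core,'' and without it the upper bound is a plan rather than a proof.
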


When $G$ is nilpotent this is saying that $\sigma(G) = p+1$ where $p$ is the smallest prime divisor of $|G|$ such that the Sylow $p$-subgroup of $G$ is noncyclic (this particular case can be deduced via more elementary arguments: see \cite{cohn}).

\begin{lemma} \label{upb}
Let $G$ be a partitionable group. Then $\rho(G) < |G|$.
\end{lemma}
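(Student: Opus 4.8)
The plan is to exhibit, for any partitionable group $G$, a partition with strictly fewer than $|G|$ members, since the trivial partition into cyclic subgroups (one for each nonidentity element, together with $\{1\}$ absorbed) has size roughly $|G|$ and we want to beat it. First I would dispose of the case where $G$ is not a $p$-group: then $G$ has an element $x$ of prime order $r$ and also elements whose order is divisible by a different prime, so in particular $G$ contains a nontrivial cyclic subgroup $\langle x \rangle$ of order $r \geq 2$ whose nonidentity elements ``overlap'' — that is, we can cover $\langle x\rangle\setminus\{1\}$ by the single subgroup $\langle x\rangle$ rather than by $r-1$ separate pieces. More usefully, I would pick any subgroup $H$ of $G$ that is itself a block of a partition of $G$ (e.g.\ a maximal cyclic subgroup, or a Hall/Sylow subgroup appearing in the Baer--Kegel--Suzuki normal partition) with $|H| \geq 3$; replacing the $|H|-1$ singleton-generated cyclic subgroups inside $H$ by the one subgroup $H$ already saves $|H|-2 \geq 1$ members. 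Concretely, take the normal partition of $G$ guaranteed by partitionability and refine it to a partition $\{1\}\cup\{H_i\}$; since $G$ is non-cyclic at least one $H_i$ is non-trivial and, because $G$ itself is not one of its own proper subgroups, the $H_i$ are proper, so this partition has at most $|G|-1$ nontrivial cyclic ``leftover'' pieces plus the grouped ones, giving $\rho(G)\leq |G|-1$.

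Let me make the counting precise in the cleanest way. Given any partition $\mathcal{P}=\{H_1,\dots,H_k\}$ of $G$, every nonidentity element lies in exactly one $H_i$, so $\sum_{i=1}^k (|H_i|-1) = |G|-1$. Hence $k = |G| - 1 - \sum_{i=1}^k(|H_i|-2)$. Since each $H_i$ is a proper subgroup of the non-cyclic group $G$, we have $|H_i|\geq 1$; if some $H_i$ has order $\geq 3$ then $\sum_i(|H_i|-2) \geq 1$ and therefore $k \leq |G|-2 < |G|$. So it suffices to produce \emph{one} partition of $G$ having a member of order at least $3$. For this I would invoke partitionability directly: $G$ admits a partition $\mathcal{Q}$, and $\mathcal{Q}$ cannot consist solely of subgroups of order $2$ together with $\{1\}$, because that would force $G$ to be an elementary abelian $2$-group $C_2\times\cdots\times C_2$ of rank $\geq 2$ in which the $2$-element subgroups partition it — but such a group of rank $\geq 2$ contains $C_2\times C_2$, which has subgroups of order $4$... actually more simply, any elementary abelian $2$-group of rank $\geq 2$ can itself be repartitioned using a subgroup of order $4$. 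So in every case a partition with a member of order $\geq 3$ exists, and $\rho(G)\leq |G|-2$.

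The one point requiring genuine (if small) care is the degenerate-looking case $G \cong C_2\times C_2$: here the only partition is $\{\langle a\rangle,\langle b\rangle,\langle ab\rangle\}$ of size $3 = |G|-1 < |G|=4$, so the lemma still holds; and this example shows the bound $|G|-1$ can be attained, so one should state the conclusion as the strict inequality $\rho(G)<|G|$ rather than anything sharper. The main obstacle, such as it is, is simply making sure the refinement/regrouping argument is applied to a genuine partition and that properness of all blocks is maintained — properness is automatic since a partition by definition consists of proper subgroups, and non-cyclicity of $G$ (which holds for all partitionable groups) guarantees at least one nontrivial block. I expect the whole argument to be short; the identity $\sum_i(|H_i|-1)=|G|-1$ does essentially all the work.
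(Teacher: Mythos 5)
Your central computation---the identity $\sum_i(|H_i|-1)=|G|-1$ combined with $|H_i|\geq 2$ for the members of a minimal partition, yielding $\rho(G)\leq |G|-1<|G|$---is exactly the paper's proof, so the proposal is correct and takes essentially the same approach. The surrounding attempt to sharpen the bound to $|G|-2$ is not needed for the lemma, and its blanket claim that every partitionable group admits a partition with a member of order at least $3$ fails for $C_2\times C_2$ (whose proper subgroups have order at most $2$), as you yourself observe in your final paragraph; this does not affect the validity of the stated inequality.
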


\begin{proof}
Let $\mathscr{P} = \{H_1,\ldots,H_m\}$ be a partition of $G$ consisting of nontrivial subgroups. Then clearly $$|G| = 1 + \sum_{i=1}^m (|H_i|-1) = 1-m + \sum_{i=1}^m |H_i| \geq 1-m+2m = m+1.$$ Since any partition of size $\rho(G)$ consists of nontrivial subgroups it follows that $\rho(G) \leq |G|-1$.
\end{proof}

\begin{lemma} \label{trick}
Let $\mathscr{P} = \{H_1,\ldots,H_m\}$ be a partition of the group $G$ consisting of nontrivial subgroups. Then $|\mathscr{P}| \geq 1 + |H_i|$ for any $i=1,\ldots,m$, and if $H_i$ is normal in $G$ and $|G:H_i|$ is a prime then $|\mathscr{P}| = 1 + |H_i|$.
\end{lemma}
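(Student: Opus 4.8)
The plan is to exploit the elementary counting identity already used in the proof of Lemma \ref{upb}, namely that if $\mathscr{P}=\{H_1,\ldots,H_m\}$ is a partition into nontrivial subgroups then the sets $H_j\setminus\{1\}$ partition $G\setminus\{1\}$. Fix an index $i$. For the inequality $|\mathscr{P}|\geq 1+|H_i|$, I would observe that every nontrivial element of $H_i$ lies in $H_i$ and in no other member of $\mathscr{P}$, so the remaining $m-1$ subgroups $H_j$ with $j\neq i$ must between them cover the $|G|-|H_i|$ elements of $G\setminus H_i$. Since each such $H_j$ contributes at least one nonidentity element not already accounted for, one already gets $m-1\geq 1$ in the degenerate case; to get the sharp bound I would instead count cosets: each coset of $H_i$ other than $H_i$ itself is disjoint from $H_i$, hence meets $\bigcup_{j\neq i}H_j$, and a cleaner route is simply the inequality $|G|=1+\sum_{j}(|H_j|-1)\geq 1+(|H_i|-1)+(m-1)(1)$ only gives $m\geq |H_i|$; so the right tool is to note that for each $j\neq i$ the subgroup $H_j$ meets each coset of $H_i$ in at most one point (because $H_i\cap H_j=\{1\}$ forces $|H_iH_j|=|H_i||H_j|$, so two elements of $H_j$ in the same $H_i$-coset would coincide). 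Hence $|H_j|\leq |G:H_i|$ for all $j$, and more usefully the $|G|-|H_i|$ elements outside $H_i$ are covered by the $H_j\setminus\{1\}$ with $j\neq i$; but a slicker counting that I would actually write down is: the number of cosets of $H_i$ is $|G:H_i|$, one of which is $H_i$; every other coset contains a nonidentity element, and that element lies in some $H_j$ with $j\neq i$, and by the coset-meeting fact each $H_j$ ($j \neq i$) accounts for at most... — rather than belabor this, the cleanest argument is the direct one below.

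Here is the argument I would commit to. Since $H_i\cap H_j=\{1\}$ for $j\neq i$, the subgroup $H_j$ intersects each left coset of $H_i$ in at most one element. Now take any transversal argument: consider the $|G:H_i|-1$ nonidentity cosets of $H_i$; their union is $G\setminus H_i$, which must be covered by $\bigcup_{j\neq i}(H_j\setminus\{1\})$, and since the $H_j\setminus\{1\}$ are pairwise disjoint we get $|G|-|H_i|=\sum_{j\neq i}|H_j\setminus\{1\}|=\sum_{j\neq i}(|H_j|-1)$. Combining with $|H_j|-1\leq |G:H_i|-1<|G:H_i|$ and with $|H_j|\geq 2$ is not quite enough by itself; the honest finish is: from $|G|-|H_i|=\sum_{j\neq i}(|H_j|-1)$ and $|H_j|-1\leq |H_i|(|G:H_i|-1)/?$... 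I will instead use that each of the $|G:H_i|-1$ nontrivial cosets of $H_i$ contributes, so really the point is just $m-1\ge$ (number of $H_j$ needed) — and the genuinely correct and short proof is: since $\sum_{j\neq i}(|H_j|-1)=|G|-|H_i|=|H_i|(|G:H_i|-1)$ and each summand is at most $|H_i|-1$ times something — I would finally settle on proving it via Lemma~\ref{upb} applied inside a quotient when $H_i\unlhd G$, and via the inequality $|H_j|\le |G:H_i|$ together with a direct element count otherwise.

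For the second, sharper statement, suppose $H_i\unlhd G$ with $|G:H_i|=p$ prime. Then $G/H_i\cong C_p$, and every $H_j$ with $j\neq i$ satisfies $H_j\cap H_i=\{1\}$, so $H_jH_i/H_i\cong H_j$ embeds in $C_p$, forcing $|H_j|\in\{1,p\}$; since $H_j$ is nontrivial, $|H_j|=p$ and $H_j$ is a complement to $H_i$ in $G$. Thus $G\setminus H_i$ is partitioned into the sets $H_j\setminus\{1\}$, each of size $p-1$, and $|G\setminus H_i|=|H_i|(p-1)$, giving $m-1=|H_i|$, i.e. $|\mathscr{P}|=1+|H_i|$, as claimed. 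The main obstacle is getting the first inequality by a genuinely clean argument rather than the coset bookkeeping above; I expect the author handles it by the one-line observation that distinct $H_j$ ($j\ne i$) lie in distinct cosets of... no — by simply noting each $H_j$ with $j \ne i$ contains at most $|G:H_i|$ elements while the $|H_i|$ "rows" (cosets) outside are not the right count either, so the expected clean proof is: apply the displayed inequality of Lemma~\ref{upb} to see $m=1+\sum_{j}(|H_j|-1)/1\ge 1+(|H_i|-1)+(m-1)$ is vacuous, hence one argues instead that the $|G:H_i|$ cosets of $H_i$ each must contain an element of some $H_j$, and $H_i$ itself is one member while the other $m-1$ members can each "reach" the coset $H_i$ only in $1$, so $m-1\ge |G:H_i|-1$ is too weak; the truly correct clean statement is $m \ge 1+|H_i|$ via: $|H_j|\le |G|/|H_i|$ is false in general — I therefore expect the genuine proof of the inequality to go through the fact that the $H_j$, $j\ne i$, together contain all $|G|-|H_i|$ non-$H_i$ elements with multiplicity one, combined with $|H_j|-1\le |H_i|-1$ failing, so honestly the inequality $|\mathscr P|\ge 1+|H_i|$ must be the one proved by choosing $i$ to be a member of maximal order and invoking Lemma~\ref{upb}-style counting; this case analysis is the part I would need to think about most carefully.
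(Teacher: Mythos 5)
Your proof of the second assertion (when $H_i \unlhd G$ and $|G:H_i|$ is prime) is complete and correct, and matches the paper's route: each $H_j$ with $j \neq i$ is a complement of $H_i$, of order $p = |G:H_i|$, and counting the elements of $G \setminus H_i$ gives $m-1 = |H_i|$ exactly.

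The first assertion, $|\mathscr{P}| \geq 1+|H_i|$, is where you have a genuine gap: you never commit to a finished argument, and your final paragraph explicitly defers it. The frustrating part is that you had already written down both ingredients the paper uses and then dismissed their combination as ``not quite enough by itself'' --- but it is enough. You established (a) the disjointness identity $|G|-|H_i| = \sum_{j\neq i}(|H_j|-1)$, and (b) the bound $|H_j| \leq |G:H_i|$ for $j \neq i$, coming from $|H_iH_j| = |H_i|\,|H_j| \leq |G|$. Chaining them gives $|G|-|H_i| \leq (m-1)\bigl(|G:H_i|-1\bigr)$, and the one identity you were missing is
\[
|G:H_i|-1 \;=\; \frac{|G|-|H_i|}{|H_i|},
\]
so that $|G|-|H_i| \leq \frac{m-1}{|H_i|}\bigl(|G|-|H_i|\bigr)$; since $H_i$ is proper, $|G|-|H_i|>0$ and you may cancel it to get $m-1 \geq |H_i|$. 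This is exactly the paper's proof. None of the alternative routes you sketch (coset transversals, choosing $i$ of maximal order, quotient arguments) is needed, and as written your proposal does not prove the inequality.
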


\begin{proof}
Fix $i \in \{1,\ldots,m\}$. Let $j \in \{1,\ldots,m\}$ with $j \neq i$. Since $H_i \cap H_j = \{1\}$ we have $|G| \geq |H_i H_j| = |H_i| |H_j|$ thus $|H_j| \leq |G:H_i|$ and we obtain
\begin{align*}
|G|-|H_i| & = \sum_{i \neq j=1}^m |H_j|-(m-1) \leq \sum_{i \neq j=1}^m |G:H_i|-(m-1) \\ & = (m-1)(|G:H_i|-1) = \frac{1}{|H_i|} (m-1)(|G|-|H_i|), 
\end{align*}
and this implies $|\mathscr{P}| = m \geq 1 + |H_i|$. Now suppose that $H_i$ is normal, and let $j \in \{1,\ldots,m\}$ with $j \neq i$. Then $H_i H_j \leq G$ thus $|H_iH_j| = |H_i||H_j|$ divides $|G|$, i.e. $|H_j|$ divides $|G:H_i|$. Therefore if $|G:H_i|$ is a prime then $|H_j| = |G:H_i|$ whenever $j \neq i$ and the above inequality becomes an equality.
\end{proof}

For $x$ a real number denote by $\left\lceil x \right\rceil$ the smallest integer $k$ such that $k \geq x$.

\begin{cor} \label{sqrt}
If $G$ is a partitionable group then $\rho(G) \geq 1 + \left\lceil \sqrt{|G|} \right\rceil$.
\end{cor}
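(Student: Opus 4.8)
The plan is to combine the elementary counting identity from the proof of Lemma \ref{upb} with the inequality of Lemma \ref{trick}. Start by fixing a partition $\mathscr{P} = \{H_1,\ldots,H_m\}$ of $G$ of minimal size, so that $m = \rho(G)$; as noted in the proof of Lemma \ref{upb}, such a partition consists of nontrivial subgroups, so each $|H_i| \geq 2$ and Lemma \ref{trick} applies to it.

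The core of the argument is a single chain of inequalities. First, since the subgroups $H_i$ partition $G$ in the set-theoretic sense (every non-identity element lies in exactly one of them), we have $|G| = 1 + \sum_{i=1}^m (|H_i| - 1)$. Next, Lemma \ref{trick} gives $m \geq 1 + |H_i|$, i.e.\ $|H_i| - 1 \leq m - 2$, for every $i$. Substituting this bound into the sum yields $|G| = 1 + \sum_{i=1}^m (|H_i|-1) \leq 1 + m(m-2) = (m-1)^2$. Taking square roots gives $\sqrt{|G|} \leq m-1 = \rho(G) - 1$, and since $\rho(G) - 1$ is an integer it is at least $\lceil \sqrt{|G|}\rceil$, which is exactly the claimed inequality $\rho(G) \geq 1 + \lceil \sqrt{|G|}\rceil$.

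I do not anticipate any real obstacle here: the statement is a direct corollary, and the only point that needs a word of justification is that a minimal partition consists of nontrivial subgroups (so that Lemma \ref{trick} is applicable and the counting identity has exactly $m$ summands), which is already implicit in the proof of Lemma \ref{upb}. The only mild subtlety is the passage from $\sqrt{|G|} \le \rho(G)-1$ to the ceiling bound, which is immediate because the right-hand side is an integer.
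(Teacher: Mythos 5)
Your proof is correct and rests on exactly the same two ingredients as the paper's proof, namely Lemma \ref{trick} and the counting identity $|G| = 1 + \sum_{i}(|H_i|-1)$, so it is essentially the same argument. The only difference is organizational: the paper splits into cases according to whether some member has order at least $\sqrt{|G|}$, whereas you apply Lemma \ref{trick} uniformly to every member to obtain $|G| \le (m-1)^2$ in a single chain of inequalities, which is a mild streamlining rather than a new route.
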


\begin{proof}
It is clearly enough to show that $\rho(G) \geq 1 + \sqrt{|G|}$. Let $\mathscr{P}$ be a partition of $G$. If there exists $H \in \mathscr{P}$ with $|H| \geq \sqrt{|G|}$ then, by Lemma \ref{trick} $\rho(G) \geq 1+|H| \geq 1+\sqrt{|G|}$. Suppose now that $|H| < \sqrt{|G|}$ for all $H \in \mathscr{P}$. Then for $\mathscr{P} = \{H_1,\ldots,H_m\}$ we have $$|G| = 1-m + \sum_{i=1}^m |H_i| < 1-m + m \sqrt{|G|},$$ which implies $m > \sqrt{|G|}+1$.
\end{proof}

\begin{cor}
Let the partitionable group $G$ admit a normal cyclic subgroup $N$ of prime index. Then $\rho(G) = 1 + |N|$.
\end{cor}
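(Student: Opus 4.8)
The plan is to extract this as an immediate consequence of Lemma~\ref{trick} together with an explicit construction of a partition of the right size. Since $N$ is normal of prime index $p = |G:N|$, Lemma~\ref{trick} tells us that any partition $\mathscr{P}$ of $G$ containing $N$ as a member satisfies $|\mathscr{P}| = 1 + |N|$; so $\rho(G) \leq 1 + |N|$ provided we can exhibit such a partition, and the reverse inequality will follow from the same lemma once we know $N$ belongs to \emph{some} partition, or more robustly from a direct lower-bound argument. First I would settle the lower bound $\rho(G) \geq 1 + |N|$: this needs a little care because a priori a minimal partition need not contain $N$. I would argue that since $N$ is cyclic and $G$ is non-cyclic (being partitionable), every cyclic subgroup of $G$ of order $|N|$ that is distinct from $N$ — if any exists — still has order dividing $|N|$, hence $|H| \leq |N|$ for every proper subgroup $H$ with $H \cap N = \{1\}$... actually the cleanest route is: take any partition $\mathscr{P} = \{H_1,\dots,H_m\}$, pick the member $H_1$ of largest order; by Lemma~\ref{trick} $m \geq 1 + |H_1|$, so it suffices to show $\mathscr{P}$ contains a member of order at least $|N|$. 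The element $x$ of $G$ generating $N$ lies in some $H_i$, and then $N = \langle x\rangle \leq H_i$, giving $|H_i| \geq |N|$, which is what we need.

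For the upper bound I would construct the partition explicitly. Write $N = \langle x \rangle$ and let $p = |G:N|$. If $G$ is cyclic this is vacuous, so assume $G$ non-cyclic. Since the Sylow $p$-subgroup structure forces the existence of an element of order $p$ outside... more directly: $G/N$ has order $p$, so $G = \bigcup_{i=0}^{p-1} g^i N$ for any $g \in G \setminus N$. The idea is to take $N$ together with all the complements of $N$, i.e. all subgroups $K$ with $KN = G$ and $K \cap N = \{1\}$; each such $K$ has order $p$. Every element of $G$ either lies in $N$ or generates, together with a suitable power, such a complement — but the complements need not partition $G \setminus N$ nicely. The robust construction: let $\mathscr{P}$ consist of $N$ and, for each coset $gN$ with $g \notin N$, handle the elements of prime order there; concretely, since $G$ is partitionable, by the classification $G$ is a Frobenius group (the only family with a normal cyclic subgroup of prime index forcing this configuration) — but I would prefer to avoid invoking the classification and instead note that $N$ normal of prime index with $N$ cyclic makes $G$ metacyclic, and one checks directly that the nontrivial cyclic subgroups not contained in $N$ all have order $p$ and intersect pairwise trivially and cover $G \setminus N$; adding $N$ yields a partition, and by Lemma~\ref{trick} its size is exactly $1 + |N|$.

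The main obstacle I anticipate is the upper-bound construction: proving that the subgroups of order $p$ not contained in $N$, together with $N$, actually form a partition — i.e. that they are pairwise trivially intersecting (clear, since two distinct subgroups of prime order $p$ meet trivially) and that their union is all of $G$. Coverage amounts to showing every element $g \in G \setminus N$ has order divisible by $p$ or lies in a subgroup of order $p$; this can fail in general metacyclic groups, so the real content is that for a \emph{partitionable} $G$ with this structure the complements of $N$ do cover $G \setminus N$. I would handle this by observing that $\mathscr{P}_0 = \{N\} \cup \{\text{all complements of } N\}$ has the property that if it fails to cover $G$ then there is an element $g$ lying in no complement; but then $\langle g \rangle \cap N \neq \{1\}$, so $x^{|N|/\ell} \in \langle g\rangle$ for a prime $\ell$, and one derives that $\langle g \rangle$ and $N$ generate a cyclic overgroup, contradicting non-cyclicity of $G$ unless $\langle g\rangle \leq N$. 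Thus $\mathscr{P}_0$ is a partition, and $|\mathscr{P}_0| = 1 + |N|$ by Lemma~\ref{trick}, completing the proof. The whole argument is short; the only genuinely delicate point is this coverage claim, which is exactly where the hypothesis that $G$ admits a partition at all gets used.
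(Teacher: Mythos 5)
Your lower bound is essentially the paper's argument: a generator $x$ of $N$ lies in some member $H_i$ of any partition, so $N = \langle x\rangle \leq H_i$ and Lemma \ref{trick} gives $|\mathscr{P}| \geq 1+|H_i| \geq 1+|N|$. But you stop one step short of the observation that makes the whole corollary a two-line consequence of Lemma \ref{trick}: since $|G:N|$ is prime, $N$ is a \emph{maximal} subgroup of $G$, so the proper subgroup $H_i$ containing $N$ must equal $N$. Hence \emph{every} partition of $G$ contains $N$ as a member, and the equality clause of Lemma \ref{trick} (for a normal member of prime index) then forces $|\mathscr{P}| = 1+|N|$ for every partition $\mathscr{P}$. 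Partitionability of $G$ supplies the existence of at least one partition, so $\rho(G)=1+|N|$ with no construction needed. This is exactly the paper's proof.

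Your upper-bound route, by contrast, contains a genuine flaw. The coverage claim --- that $N$ together with the subgroups of order $p$ not contained in $N$ covers $G$ --- is true for partitionable $G$, but your justification of it is invalid: from $\langle g\rangle \cap N \neq \{1\}$ you cannot conclude that $\langle g\rangle$ and $N$ generate a cyclic overgroup. In $Q_8$ with $N=\langle i\rangle$ and $g=j$ one has $\langle g\rangle \cap N \neq \{1\}$ while $\langle g, N\rangle = Q_8$ is not cyclic; $Q_8$ is not partitionable, so this does not contradict the corollary, but it does refute the implication your coverage argument rests on. The natural way to establish coverage is to first show that every partition contains $N$ as a member and that every other member has order dividing $p$ --- which is precisely the two-line argument above, at which point the explicit construction becomes superfluous.
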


\begin{proof}
Being cyclic and maximal, the subgroup $N$ of $G$ must belong to any partition. Now apply Lemma \ref{trick}.
\end{proof}

\begin{cor} \label{embed}
Let $H,G$ be partitionable groups with $H \leq G$. Then $\rho(H) \leq \rho(G)$.
\end{cor}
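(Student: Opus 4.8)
The plan is to push a minimum-size partition of $G$ down to $H$ by intersecting its members with $H$. Let $\mathscr{P}=\{K_1,\ldots,K_m\}$ be a partition of $G$ with $m=\rho(G)$; as noted in the proof of Lemma \ref{upb}, we may assume every $K_i$ is a nontrivial proper subgroup. Set $\mathscr{Q}=\{K_i\cap H\ :\ 1\le i\le m,\ K_i\cap H\neq\{1\}\}$, regarded as a set so that repetitions are discarded. Since $\mathscr{P}$ covers $G\supseteq H$, every non-identity element of $H$ lies in some $K_i$ and hence in a member of $\mathscr{Q}$, so $\mathscr{Q}$ covers $H$; and if $L\neq L'$ are members of $\mathscr{Q}$, then writing $L=K_i\cap H$ and $L'=K_j\cap H$ forces $i\neq j$, whence $L\cap L'=(K_i\cap K_j)\cap H=\{1\}$. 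Thus, provided each intersection $K_i\cap H$ that occurs is a proper subgroup of $H$, the family $\mathscr{Q}$ is a partition of $H$ with $|\mathscr{Q}|\le m$, and we conclude $\rho(H)\le m=\rho(G)$.

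The only obstruction to this argument is the case in which $H$ is contained in some member, say $H\le K_1$: then $K_1\cap H=H$ is not proper, and moreover $H\cap K_j\le K_1\cap K_j=\{1\}$ for every $j\neq 1$, so the restricted family genuinely fails to cover $H$. I would handle this case by a counting argument rather than by restriction: Lemma \ref{trick} applied to $\mathscr{P}$ and its member $K_1$ gives $\rho(G)=m\ge 1+|K_1|\ge 1+|H|$, while Lemma \ref{upb} gives $\rho(H)\le |H|-1$; combining the two inequalities yields $\rho(H)\le |H|-1<1+|H|\le\rho(G)$, which is in fact a strict inequality in this case.

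I do not expect a genuine difficulty here, and in particular no induction on $|G|$ is needed. The points requiring care are purely bookkeeping: discarding trivial and repeated intersections when forming $\mathscr{Q}$, verifying that $\mathscr{Q}$ is a cover of $H$ by \emph{proper} subgroups precisely when no $K_i$ contains $H$, and recognizing that the exceptional case $H\le K_i$ cannot be settled by restriction but must instead be dispatched via the size estimates of Lemmas \ref{trick} and \ref{upb}.
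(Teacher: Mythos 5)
Your argument is correct and follows essentially the same route as the paper's proof: split according to whether some member of a minimal partition of $G$ contains $H$, handle that exceptional case via Lemmas \ref{trick} and \ref{upb}, and otherwise intersect the members with $H$ to obtain a partition of $H$ of size at most $\rho(G)$. The extra bookkeeping about discarding trivial and repeated intersections is harmless and does not change the substance.
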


\begin{proof}
Let $\mathscr{P} = \{H_1,\ldots,H_m\}$ be a partition of $G$ of size $\rho(G)$. If there exists $i \in \{1,\ldots,m\}$ such that $H \subseteq H_i$ then Lemma \ref{upb} and Lemma \ref{trick} imply $\rho(G) \geq 1+|H_i| \geq 1+|H| > |H| > \rho(H)$. Now assume that $H \not \subseteq H_i$ for all $i \in \{1,\ldots,m\}$. Then $\{H_1 \cap H, \ldots, H_m \cap H\}$ is a partition of $H$ of size $m = \rho(G)$, hence $\rho(H) \leq \rho(G)$.
\end{proof}

\section{$p$-groups} \label{sectpgps}

In this section we consider $p$-groups. First, we characterize partitionable nilpotent groups $G$ with $\sigma(G)=\rho(G)$ as follows.

\begin{prop} \label{mainpgp}
Let $G$ be a partitionable nilpotent group. Then $\sigma(G) = \rho(G)$ if and only if $G \cong C_p \times C_p$ for some prime $p$.
\end{prop}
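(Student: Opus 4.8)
The plan is to reduce immediately to non-cyclic $p$-groups and then exploit the tension between the two bounds already at our disposal: for such a group $\sigma(G)=p+1$, while $\rho(G)\geq 1+\lceil\sqrt{|G|}\,\rceil$ by Corollary \ref{sqrt}. First I would record two preliminary facts. A partitionable group is non-cyclic (a cyclic group is not a union of proper subgroups), and a \emph{nilpotent} partitionable group must be a $p$-group: inspecting the Baer--Kegel--Suzuki list recalled in the Introduction, the only nilpotent entry is ``a $p$-group with $H_p(G)\neq G$'', since $S_4$, the Hughes--Thompson type groups, the Frobenius groups, $\PSL_2(p^m)$, $\PGL_2(p^m)$ and the Suzuki groups are all non-nilpotent. (If one prefers not to invoke the classification here, a short direct argument also works: choose central elements of two distinct prime orders; then, by multiplying a suitable power of a given nontrivial $g\in G$ by an appropriate central element of prime order and inspecting the resulting cyclic subgroup, one checks that all nontrivial elements of $G$ lie in a single member of the partition, which is therefore equal to $G$ --- a contradiction.) So from now on $G$ is a non-cyclic $p$-group, and then Tomkinson's theorem \cite{tom} together with the remark following it in Section \ref{general} gives $\sigma(G)=p+1$.

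For the ``if'' direction, suppose $G\cong C_p\times C_p$. Its $p+1$ subgroups of order $p$ pairwise intersect trivially and cover $G$, hence form a partition, so $\rho(G)\leq p+1$; on the other hand $\rho(G)\geq 1+\lceil\sqrt{p^2}\,\rceil=p+1$ by Corollary \ref{sqrt}. Therefore $\rho(G)=p+1=\sigma(G)$.

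For the ``only if'' direction, assume $\sigma(G)=\rho(G)$. By the reduction above this common value is $p+1$, and Corollary \ref{sqrt} then gives
$$p+1 \;=\; \rho(G) \;\geq\; 1+\lceil\sqrt{|G|}\,\rceil \;\geq\; 1+\sqrt{|G|},$$
so that $|G|\leq p^2$. Since $G$ is a non-cyclic $p$-group this forces $|G|=p^2$, and hence $G\cong C_p\times C_p$.

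I do not expect a genuine obstacle here: the whole argument is driven by the quantitative gap between $\sigma(G)$, which stays equal to $p+1$ for every non-cyclic $p$-group, and $\rho(G)$, which is at least of the order of $\sqrt{|G|}$. The only load-bearing ingredients are the value of $\sigma$ for non-cyclic $p$-groups and Corollary \ref{sqrt}, both already available; the single point that deserves a little care is making the reduction to $p$-groups rigorous, which is most cleanly done by quoting the Baer--Kegel--Suzuki classification.
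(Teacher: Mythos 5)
Your proof is correct and follows essentially the same route as the paper: Tomkinson's value $\sigma(G)=p+1$ played against the lower bound $\rho(G)\geq 1+\sqrt{|G|}$ of Corollary \ref{sqrt} forces $|G|\leq p^2$. The only real difference is that your preliminary reduction to $p$-groups is unnecessary: the paper takes $p$ to be the smallest prime whose Sylow $p$-subgroup is noncyclic, so that Sylow subgroup already has order at least $p^2$, and $|G|\leq p^2$ then gives $|G|=p^2$ and $G\cong C_p\times C_p$ directly, with no appeal to the Baer--Kegel--Suzuki classification.
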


\begin{proof}
Of course $\sigma(C_p \times C_p) = p+1 = \rho(C_p \times C_p)$. Now assume $G$ is partitionable with $\sigma(G)=\rho(G)$. Tomkinson's theorem implies that $\sigma(G) = p+1$ where $p$ is the smallest prime divisor of $|G|$ such that the Sylow $p$-subgroup of $G$ is noncyclic, hence $\rho(G) = \sigma(G) = p+1$ and Corollary \ref{sqrt} implies $p+1 \geq \sqrt{|G|}+1$, hence $|G| \leq p^2$. Since the Sylow $p$-subgroup of $G$ is noncyclic it has size at least $p^2$, therefore $|G| = p^2$ and $G \cong C_p \times C_p$.
\end{proof}

We now want to compute $\rho(G)$ when $G$ is an elementary abelian $p$-group. Let $p$ be a prime and let $n$ be a positive integer larger than $1$. Let ${C_p}^n$ denote the elementary abelian $p$-group of rank $n$.

\begin{prop} \label{cpn} $\rho({C_p}^n) = 1+p^{\left\lceil n/2 \right\rceil}$. \end{prop}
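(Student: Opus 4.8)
The plan is to prove the two inequalities $\rho({C_p}^n) \leq 1 + p^{\lceil n/2 \rceil}$ and $\rho({C_p}^n) \geq 1 + p^{\lceil n/2 \rceil}$ separately, thinking of ${C_p}^n$ as the vector space $V = \mathbb{F}_p^n$ and of subgroups as subspaces. For the upper bound, write $n = \lceil n/2 \rceil + \lfloor n/2 \rfloor$ and fix a direct sum decomposition $V = U \oplus W$ with $\dim U = \lceil n/2 \rceil =: k$ and $\dim W = \lfloor n/2 \rfloor$. I would cover $V$ by the $p^k$ cosets of $W$ in $V$; each such coset $v + W$ is contained in a subspace of the form $\langle v \rangle \oplus W$ when $v \notin W$, and $W$ itself handles the coset $0 + W$. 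Since $\dim W \le k$, each of these $p^k$ subspaces has dimension at most $k+1 \le n$ hence is proper (using $n \geq 2$), and together with $U$ — actually $U$ is not needed; the $p^k$ translates $\langle v_i\rangle \oplus W$ already cover $V$ as $v_i$ ranges over coset representatives, but they pairwise intersect in $W$, not in $\{0\}$. The fix is the standard one: instead take the $1 + p^k$ subspaces to be $W$ together with, for each of the $p^k$ vectors $u \in U$, the subspace $\langle u \rangle$... no. Let me instead use the cleaner construction: partition $V$ into $W$ and the $p^k - 1$ nonzero cosets, and for each nonzero $u \in U$ pair up the cosets $\{u + w : w\in W\}$ — actually the right object is: the subspaces $S_u := \langle u \rangle \oplus W'$ where we must remove the $W$-overlap. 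The clean statement is that $\{ \langle u \rangle \oplus W \ :\ u \in U \setminus\{0\}\} \cup \{W\}$ is NOT a partition; rather one uses that $V \setminus W$ is partitioned into $p^k - 1$ sets of size... I would instead invoke the transversal/spread construction: a partition of $V$ into $W$ and $p^k-1$ cosets re-grouped so that each group together with $W$... The genuinely clean route, which I would write out, is: take the $p^k$ subspaces $\langle u_i \rangle \oplus W$ where $u_1, \dots, u_{p^k}$ are coset representatives of $W$, discard $W$'s redundancy by noting these overlap only in $W$; then replace this by a true partition by a dimension shift — set aside $W$ and observe $V/W \cong {C_p}^k$ is covered by $1 + p^k$ ... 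I will ultimately present the symmetric construction $\{W\} \cup \{ x + \langle x \rangle\text{-line stuff}\}$ properly; the honest upper-bound construction is: pick $U, W$ as above with now $\dim U \le \dim W$ WLOG only if $n$ even, and use the subspaces $\langle u \rangle \oplus W$ for $u$ in a set of line representatives of $U$ — there are $(p^k-1)/(p-1)$ of these — no, that undercounts the covering.

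Let me state the upper bound construction I actually trust: since $k = \lceil n/2\rceil \ge n/2$, we have $\dim W = n - k \le k$, so $W$ embeds in $U$; fix an injection and for each $u \in U$ let $P_u = \{(u, \phi(u)) \}$... This is the classical fact that $\mathbb{F}_p^n$ with $n \le 2k$ has a partition by $1 + p^k$ subspaces coming from a partial spread: take $k$-dimensional subspaces $W, U_1, \dots$ pairwise meeting trivially. Concretely, identify $\mathbb{F}_p^n$ as a subspace of $\mathbb{F}_{p^k}^2$ (dimension $2k \ge n$), where the $p^k + 1$ lines through $0$ form a partition into $k$-spaces; intersecting with our $n$-space ${C_p}^n$ gives a partition into at most $p^k+1$ subspaces each of dimension $\le k < n$, at least one of which is $\{0\}$ only if the $n$-space is a hyperplane-type configuration — arranging ${C_p}^n$ to contain one full line of $\mathbb{F}_{p^k}^2$, we get exactly a partition into $1 + p^k$ nontrivial subspaces, none equal to $V$ since each has dimension $\le k < n$ (as $n \ge 2$ forces $k < n$ unless $n=2$, where $k=1$ and the $p+1$ lines of ${C_p}^2$ work). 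That gives $\rho(V) \le 1 + p^k$.

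For the lower bound I would argue as follows. Let $\mathscr{P} = \{H_1, \dots, H_m\}$ be a partition of $V$ with $m = \rho(V)$, all $H_i$ nontrivial proper subspaces. Let $d = \max_i \dim H_i$ and say $\dim H_1 = d$. By Lemma~\ref{trick}, $m \ge 1 + |H_1| = 1 + p^d$, so if $d \ge k$ we are done. Suppose for contradiction $d \le k - 1$, i.e. $d < \lceil n/2\rceil$, which forces $2d < n$, so $2d \le n - 1$. Now count: the nonzero vectors of $V$ are partitioned among the $H_i$, giving $p^n - 1 = \sum_{i=1}^m (p^{\dim H_i} - 1) \le m(p^d - 1)$, hence $m \ge \frac{p^n - 1}{p^d - 1} > p^{n-d} \ge p^{d+1}$ — wait, that already gives $m > p^{d+1}$, and I need to push this to reach a contradiction with... actually this doesn't immediately contradict anything; I need the dimension-counting refinement. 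The key extra input is that two subspaces $H_i, H_j$ of dimensions $a, b$ with $a + b > n$ must intersect nontrivially, contradicting the partition property; so in a partition all pairwise dimension sums are $\le n$. Thus if $\dim H_1 = d$, every other $H_j$ has $\dim H_j \le n - d$. Combined with $d \le k-1 \le n-d-1 < n-d$, hmm. The cleanest finish: it is a known theorem (and I would reprove it) that in any partition of ${C_p}^n$ into subspaces, the maximal dimension $d$ satisfies $d \ge n/2$; the reason is that the $p^d$-element subspace $H_1$ meets every coset of... Actually: fix $H_1$ of dimension $d$. For each $H_j$ ($j \neq 1$), $H_j \cap H_1 = 0$ so $H_j$ injects into $V/H_1$, a space of dimension $n - d$, giving $\dim H_j \le n - d$. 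The nonzero vectors outside $H_1$, numbering $p^n - p^d$, are covered by the $H_j$, $j \ne 1$, each contributing at most $p^{n-d} - 1$ new nonzero vectors — but we must be careful they're covering vectors outside $H_1$, so each $H_j$ contributes exactly $|H_j| - 1 \le p^{n-d} - 1$ vectors (all nonzero, none in $H_1$). Hence $m - 1 \ge \frac{p^n - p^d}{p^{n-d} - 1} = \frac{p^d(p^{n-d}-1)}{p^{n-d}-1} = p^d$, so $m \ge 1 + p^d$ — same bound as Lemma~\ref{trick}. To get $d \ge k$: if $d < k = \lceil n/2 \rceil$ then $d \le \lceil n/2\rceil - 1$, so $n - d \ge n - \lceil n/2 \rceil + 1 = \lfloor n/2 \rfloor + 1 > d$. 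Now iterate the argument inside a subspace, or use that $H_1$ plus all the $H_j$ must actually tile, forcing a counting identity $\sum (p^{\dim H_j} - 1) = p^n - 1$ with all $\dim H_j \le n - d$ and at least one equal to $d$; pairing each element of $H_1\setminus 0$... The main obstacle, and the step I expect to spend the most care on, is exactly this lower bound — showing that no partition can have all parts of dimension $< \lceil n/2\rceil$. I would resolve it by the standard argument: if $d$ is the top dimension and $d < \lceil n/2\rceil$, then since every $H_j$ has dimension $\le n - d$ and the part $H_1$ of dimension $d$ has $H_1 H_j = V$ impossible unless... take two parts $H_1, H_2$ of top dimension $d$; then $\dim(H_1 + H_2) = 2d < n$, so $H_1 + H_2 \ne V$, and $H_1 + H_2$ is a subspace partitioned by $\{H_i \cap (H_1+H_2)\}$ — restrict to get a partition of ${C_p}^{2d}$ whose parts have dimension $\le d$, but $\rho({C_p}^{2d}) \ge 1 + p^d$ would need a part of dimension... this recursion bottoms out. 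Cleanly: induct on $n$; having fixed $H_1$ of dimension $d \ge 1$, the quotient map $V \to V/H_1$ sends $\mathscr{P} \setminus \{H_1\}$ to a partition-like cover of $V/H_1 \cong {C_p}^{n-d}$ by subspaces (injective on each $H_j$), and by induction $m - 1 \ge \rho({C_p}^{n-d}) - $ (something), combined with $m \ge 1 + p^d$, optimizing over $d$ gives $m \ge 1 + p^{\lceil n/2 \rceil}$. I expect the bookkeeping in this inductive optimization — ensuring the image cover is genuinely a partition and handling the inequality $\min_d \max(1 + p^d,\ 1 + \rho({C_p}^{n-d})) \ge 1 + p^{\lceil n/2\rceil}$ — to be the technical heart of the proof.
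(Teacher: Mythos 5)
Your upper bound, once you finally settle on it, is essentially the paper's: with $k=\lceil n/2\rceil$, view a $2k$-dimensional space as $\mathbb{F}_{p^k}^2$, whose $p^k+1$ lines over $\mathbb{F}_{p^k}$ form a partition into trivially-intersecting subgroups, and intersect with an embedded copy of ${C_p}^n$. (The paper phrases the odd case via Corollary \ref{embed} applied to ${C_p}^n\leq {C_p}^{n+1}$, which is the same idea and sidesteps your worry about which intersections are trivial: trivial members can simply be discarded without affecting an upper bound.) The long sequence of abandoned constructions preceding this should be cut; they add nothing and one of them ($\{W\}\cup\{\langle u\rangle\oplus W\}$) is, as you note yourself, not a partition.

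The genuine gap is in the lower bound. After reducing to the case where the maximal part dimension $d$ satisfies $d\le k-1$ (so that Lemma \ref{trick} alone does not finish), you derive the counting inequality $m\ge (p^n-1)/(p^d-1)>p^{n-d}$, declare that it ``doesn't immediately contradict anything,'' and then drift through several unfinished alternatives (pairwise dimension sums, quotient maps, an induction whose ``bookkeeping'' you explicitly defer), ending without a complete argument. But the inequality you discarded already closes this case: if $d\le\lceil n/2\rceil-1$ then $n-d\ge\lfloor n/2\rfloor+1\ge\lceil n/2\rceil$, so $m>p^{n-d}\ge p^{\lceil n/2\rceil}$ and hence $m\ge 1+p^{\lceil n/2\rceil}$. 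This is precisely how the paper concludes (for $n$ odd; for $n$ even it simply invokes Corollary \ref{sqrt}, i.e.\ $\rho(G)\ge 1+\sqrt{|G|}$, which is the same count). As written, your proposal does not contain a proof of the lower bound, and the step you single out as ``the technical heart'' is in fact a one-line consequence of an estimate you had already written down.
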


\begin{proof} Suppose first that $n$ is even. Note that we can look at $G = C_p^n$ as a vector space of dimension $2$ over the field $F$ of size $p^{n/2}$. Two distinct $1$-dimensional $F$-subspaces of $G$ intersect trivially, and clearly $G$ is the union of its $1$-dimensional $F$-subspaces. It follows that $$\rho(G) \leq \frac{|G|-1}{|F|-1} = 1 + |F| = 1 + p^{n/2} = 1+\sqrt{|G|}.$$ Corollary \ref{sqrt} then implies that $\rho(G) = 1 + p^{n/2}$.

\ 

Now assume that $n$ is odd. We need to show that $m = \rho({C_p}^n)$ equals $1 + p^{(n+1)/2}$. Note that ${C_p}^n$ embeds in ${C_p}^{n+1}$ as a normal subgroup of index $p$. Corollary \ref{embed} implies that $m \leq \rho({C_p}^{n+1}) = 1 + p^{(n+1)/2}$, our upper bound. Consider now a partition $\mathscr{P} = \{H_1,\ldots,H_m\}$ of ${C_p}^n$ of size $m$. We have $1 + p^{(n+1)/2} \geq m \geq 1 + |H_i|$ by Lemma \ref{trick} thus $|H_i| \leq p^{(n+1)/2}$, for all $i \in \{1,\ldots,m\}$. If $|H_i| = p^{(n+1)/2}$ for some $i \in \{1,\ldots,m\}$ then we deduce $m = 1 + p^{(n+1)/2}$. Now assume that $|H_i| \leq p^{(n-1)/2}$ for all $i \in \{1,\ldots,m\}$. Then we have $$p^n = 1-m + \sum_{i=1}^m |H_i| \leq 1-m + m p^{(n-1)/2} \leq 1+(p^{(n+1)/2}+1)(p^{(n-1)/2}-1),$$a contradiction.
\end{proof}

\begin{cor}
If $G$ is any partitionable group then $\rho(G) \leq |G|-1$ and equality holds if and only if $G \cong C_2 \times C_2$.
\end{cor}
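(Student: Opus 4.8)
The plan is to combine the general upper bound $\rho(G) \leq |G|-1$ from Lemma~\ref{upb} with a sharpening of Corollary~\ref{sqrt} to squeeze out the equality case. First I would record that the bound $\rho(G) \leq |G|-1$ is exactly Lemma~\ref{upb}, so only the equality characterization needs work. The direction ``$C_2 \times C_2 \Rightarrow \rho(G) = 3$'' is immediate: $C_2 \times C_2$ has exactly three subgroups of order $2$, they pairwise intersect trivially, and their union is the whole group, so $\rho(C_2\times C_2) = 3 = |G|-1$ (this is also the $p=2$, $n=2$ case of Proposition~\ref{cpn}). The substance is the converse: if $\rho(G) = |G|-1$ then $G \cong C_2 \times C_2$.

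For the converse, suppose $\mathscr{P} = \{H_1,\ldots,H_m\}$ is a partition of $G$ of size $m = \rho(G) = |G|-1$ consisting of nontrivial subgroups. Revisiting the computation in the proof of Lemma~\ref{upb},
\[
|G| = 1 - m + \sum_{i=1}^m |H_i| \geq 1 - m + 2m = m+1 = |G|,
\]
so every inequality is an equality, forcing $|H_i| = 2$ for all $i$. Thus $G$ is generated by involutions and, more importantly, every nontrivial element of $G$ lies in a subgroup of order $2$, i.e. every nontrivial element of $G$ has order $2$. Hence $G$ is an elementary abelian $2$-group, say $G \cong C_2^{\,n}$ with $n \geq 2$ (it is noncyclic, being partitionable). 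Now invoke Proposition~\ref{cpn}: $\rho(C_2^{\,n}) = 1 + 2^{\lceil n/2\rceil}$. Setting this equal to $|G| - 1 = 2^n - 1$ gives $2^{\lceil n/2 \rceil} = 2^n - 2$, which for $n = 2$ reads $2 = 2$, and for $n \geq 3$ fails since $2^{\lceil n/2\rceil} \leq 2^{n-1} < 2^n - 2$. Therefore $n = 2$ and $G \cong C_2 \times C_2$.

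I expect no serious obstacle here: the argument is a short extraction of the equality case from two already-proved lemmas. The only point requiring a moment's care is the inequality $2^{\lceil n/2\rceil} < 2^n - 2$ for $n \geq 3$, which follows from $\lceil n/2 \rceil \leq n-1$ together with $2^{n-1} < 2^n - 2$ for $n \geq 2$; alternatively one can bypass Proposition~\ref{cpn} entirely and argue directly that for $G = C_2^{\,n}$ with $n \geq 3$ the subspace partition by lines has size $(2^n-1)/(2-1) = 2^n - 1$ but can be beaten — e.g. a partition refining a decomposition $G = V \oplus W$ with $\dim V = \lceil n/2\rceil$ shows $\rho(G) \leq 1 + 2^{\lceil n/2\rceil} < 2^n - 1$. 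Either route closes the proof.
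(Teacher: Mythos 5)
Your proposal is correct and follows essentially the same route as the paper: equality in the counting argument of Lemma~\ref{upb} forces every member of the partition to have order $2$, hence $G$ is an elementary abelian $2$-group, and Proposition~\ref{cpn} then pins down $G \cong C_2 \times C_2$. The only difference is that you spell out the final numerical comparison $2^{\lceil n/2\rceil} = 2^n - 2$, which the paper leaves implicit.
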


\begin{proof}
The inequality $\rho(G) \leq |G|-1$ follows from Lemma \ref{upb}. If equality holds then in the proof of Lemma \ref{upb} we have $|H_i|=2$ for all $i=1,\ldots,m$ so $G$ is an elementary abelian $2$-group and the result follows directly from Proposition \ref{cpn}.
\end{proof}

\section{The symmetric group $S_4$} \label{sects4}

Recalling that $\sigma(S_4)=4$ (see \cite{cohn}) the following result implies that $\sigma(S_4) \neq \rho(S_4)$. It was obtained independently by N. Sizemore in \cite{sizemore} using GAP. Observe that $S_4 \cong \PGL_2(3)$ hence this proves Theorem \ref{linear} in this particular case.

\begin{prop}
$\rho(S_4)=10$.
\end{prop}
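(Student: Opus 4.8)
The plan is to pin down $\rho(S_4)$ by sandwiching it between a lower bound and an explicit partition. For the lower bound, Corollary \ref{sqrt} gives only $\rho(S_4) \geq 1 + \lceil\sqrt{24}\rceil = 6$, which is too weak, so I would argue directly from the conjugacy-class structure of $S_4$. Any partition $\mathscr{P}$ of $S_4$ must cover the $9$ elements of order $2$ (the $6$ transpositions and the $3$ double transpositions), the $8$ elements of order $3$, and the $6$ elements of order $4$. The key constraint is that a subgroup containing an element of order $4$ must contain the cyclic group it generates, and a subgroup $H \in \mathscr{P}$ containing a $3$-cycle must be (inside a partition) either cyclic of order $3$ or contain a full Sylow $3$-subgroup; crucially no member of a partition can contain two elements that generate intersecting nontrivial cyclic subgroups coming from different "types".

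The main step is a careful case analysis on which subgroups can appear. First I would observe that the $8$ elements of order $3$ lie in the four Sylow $3$-subgroups $C_3$, and in a partition these four $C_3$'s must all be members (any larger subgroup containing one of them, e.g. an $S_3$ or $A_4$, would force its order-$2$ elements to be shared, contradicting disjointness once we also need to cover order-$4$ elements). That already contributes $4$ members covering $8$ elements. Next, the $6$ elements of order $4$ lie in the three cyclic subgroups $C_4$ (the Sylow $2$-subgroups $D_8$ each contain one such $C_4$, but a $D_8$ in a partition would need its order-$2$ elements disjoint from everything, and the three $D_8$'s pairwise intersect in the normal $V_4$, so at most one $D_8$ can be used). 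I would split into the subcase where $\mathscr{P}$ uses a $D_8$ and the subcase where it uses only $C_4$'s and smaller $2$-subgroups; in the first subcase one $D_8$ covers $7$ of the $9$ involutions plus $4$ of the $6$ order-$4$ elements, and the remaining $2$ transpositions and $2$ order-$4$ elements force several more small members, while in the second subcase the three $C_4$'s are forced (covering all $6$ order-$4$ elements and $3$ involutions, namely the double transpositions that happen to lie in them — actually the $C_4$'s contain the double transpositions, so those get covered here), leaving the $6$ transpositions to be covered by subgroups of order $2$, giving $4 + 3 + 6 = 13 \geq 10$; optimizing across the cases yields the bound $\rho(S_4) \geq 10$.

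For the upper bound I would exhibit an explicit partition of size $10$: take the four Sylow $3$-subgroups (order $3$, covering the $8$ three-cycles), the three cyclic subgroups of order $4$ (covering the $6$ four-cycles and, as a byproduct, the $3$ double transpositions), and the three subgroups of order $2$ generated by the transpositions $(1\,2)$, $(1\,3)$, $(2\,3)$ — wait, this misses $(1\,4),(2\,4),(3\,4)$, so instead one must use all $6$ transposition subgroups and drop redundancy elsewhere; the clean choice is the four $C_3$'s, the three $C_4$'s, and three further $C_2$'s chosen so that together with the double transpositions already covered every transposition is hit — since each $C_4$ already covers one double transposition and no transposition, we genuinely need the $6$ transpositions covered by order-$2$ subgroups, but we may instead replace one $C_4$-strategy by using $V_4$'s; the correct size-$10$ partition is: three Sylow $2$-subgroups are not disjoint, so use $V_4 = \{e,(12)(34),(13)(24),(14)(23)\}$ once (covering all $3$ double transpositions), the three $C_4$'s cannot then be used, so cover order-$4$ elements by the three $C_4$'s after all and drop $V_4$ — this shows the bookkeeping is delicate, and the honest construction is: four $C_3$'s, three $C_4$'s, and three $\langle\tau\rangle$ for a suitable set of three transpositions, together with... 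The hard part, and the main obstacle, is precisely this optimization: reconciling the lower-bound case analysis with a matching explicit partition, i.e. verifying that $10$ is simultaneously achievable and not beatable, which requires checking that no mixture of $D_8$'s, $V_4$'s, $C_4$'s, $S_3$'s and small subgroups does better. I would handle it by a short exhaustive argument over the (few) conjugacy classes of subgroups of $S_4$, using Lemma \ref{trick} to rule out any member of order $\geq 5$ (there is none that is cyclic except $C_4$ anyway) and the disjointness of intersections to force the counts above.
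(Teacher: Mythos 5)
There is a genuine gap, and it originates in one incorrect structural claim: you assert that the four Sylow $3$-subgroups must all be members of any partition, i.e.\ that no $S_3$ can appear. That is false, and it is fatal to both halves of your sandwich. Once the three $C_4$'s are forced into $\mathscr{P}$ (they are: the only proper subgroups properly containing a $C_4$ are the $D_8$'s, and $D_8$ and $A_4$ are excluded by the index argument of Lemma \ref{trick}), the three double transpositions are already covered, so no Klein four-subgroup can be used and every transposition not lying in an $S_3$-member needs its own $C_2$. Under your forced configuration (four $C_3$'s, three $C_4$'s, six $C_2$'s) every partition would have size $13$, which is why your attempted constructions all come out to $13$ and you never actually exhibit a partition of size $10$; you acknowledge this yourself when you call the optimization ``the main obstacle'' and leave it unresolved.

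The missing idea is that exactly one $S_3$ is admissible and a minimal partition must use it. If $S \cong S_3 \in \mathscr{P}$ then every other member has order at most $|S_4:S|=4$, so at most one $S_3$ occurs; every member contains at most two elements of order $3$ (the candidates are $C_3$ and $S_3$, since $A_4$ is excluded), so at least four members are needed for the eight $3$-elements; the single $S_3$ absorbs three transpositions, and the remaining three transpositions each require a $C_2$, giving $|\mathscr{P}| \geq 3+4+3=10$. The matching partition is one point stabilizer $S_3$, the three Sylow $3$-subgroups not contained in it, the three $C_4$'s, and the three $C_2$'s generated by the transpositions moving the fixed point. (A secondary error: a $D_8$ in $S_4$ contains five involutions and two elements of order $4$, not ``$7$ of the $9$ involutions plus $4$ of the $6$ order-$4$ elements''; that case is excluded anyway, but the counts matter for the exclusion.)
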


\begin{proof}
We will think of the group $G=S_4$ acting naturally on $\{1,2,3,4\}$. $G$ has $9$ elements of order $2$, $8$ elements of order $3$ and $6$ elements of order $4$. The maximal subgroups of $S_4$ are of one of the following types: $S_3$ (the point stabilizers), $A_4$ (the alternating group of degree $4$), $D_8$ (the Sylow $2$-subgroups). Let $\mathscr{P}$ be a partition of $S_4$ of size $\rho(S_4)$. We will show that $|\mathscr{P}|=10$. Observe that $A_4 \not \in \mathscr{P}$ because if $A_4 \in \mathscr{P}$ then for all $H \in \mathscr{P}$ distinct from $A_4$ we have $|H| \leq |S_4:A_4|=2$ and this is impossible because $S_4$ has elements of order $4$ outside $A_4$. Observe that $\mathscr{P}$ cannot contain a member $D$ isomorphic to $D_8$ because if $D_8 \cong D \in \mathscr{P}$ and $H \in \mathscr{P}-\{D\}$ then $|H| \leq |S_4:D|=3$, this is impossible because $D_8$ contains only two elements of order $4$ and $S_4$ contains $6$ elements of order $4$. This implies that the three cyclic subgroups of order $4$ must belong to $\mathscr{P}$ (the only proper subgroups properly containing them are the dihedral subgroups of order $8$). Now assume $S_3 \cong S \in \mathscr{P}$. Then if $H \in \mathscr{P}-\{S\}$ we have $|H| \leq |S_4:S| = 4$ so $S$ is the unique member of $\mathscr{P}$ isomorphic to $S_3$. In any case we need at least four subgroups to cover the elements of order $3$. There are three $2$-cycles left uncovered so $|\mathscr{P}|=3+4+3=10$.
\end{proof}

\section{Frobenius groups} \label{sectfrob}

In this section we prove Theorem \ref{main} when $G$ is a Frobenius group.

\ 

We recall a known fact due to Gasch\"utz, Carter, Fischer, Hawkes, and Barnes. It basically says that the number of complements of the chief factors of a given chief series of a solvable group $G$ does not depend, up to a permutation, on the given chief series. In particular if no chief factor of a chief series of $G$ has a given number of complements $n$ then no chief factor of $G$ has precisely $n$ complements. It is a more or less straightforward consequence of \cite[Theorem 9.13 on page 33]{dh} and \cite[Satz 3]{gasch}. A proof can be found in \cite{barnes}. We will use it afterwards without mention.

\begin{prop}
If $G$ is a solvable group and $\{1\} = K_0 < K_1 < \ldots < K_t=G$ is a chief series of $G$, with $n_i$ the number of complements of $K_i/K_{i-1}$ in $G/K_{i-1}$ for all $i=1,\ldots,t$ (so that for example $n_i=0$ if and only if $K_i/K_{i-1}$ is Frattini), the sequence of numbers $(n_1,\ldots,n_t)$, up to reordering, does not depend on the given chief series.
\end{prop}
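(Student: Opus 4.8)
The plan is to prove the equivalent statement that for any two chief series of $G$ there is a bijection between their chief factors under which corresponding factors are $G$-isomorphic and have the same number of complements; since reordering is allowed this yields the proposition. That a bijection matching the $G$-isomorphism types exists is the classical module-theoretic refinement of the Jordan--H\"older theorem, obtained by checking that the isomorphisms produced by the Zassenhaus lemma are $G$-isomorphisms, so the genuinely new content is the matching of the complement-numbers. I would prove this by induction on $|G|$, comparing the bottom terms of the two chief series.

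So fix two chief series with bottom terms the minimal normal subgroups $N$ and $N'$ of $G$. If $N=N'$, passing to $G/N$ leaves the complement-numbers of the factors above $N$ unchanged, because $G/K_{i-1}=(G/N)/(K_{i-1}/N)$ for $i\geq2$, and $N$ contributes the same number of complements to both series; the inductive hypothesis for $G/N$ then finishes. If $N\neq N'$ then $N\cap N'=\{1\}$, so $P:=NN'=N\times N'$ is normal in $G$ and $P/N\cong_G N'$, $P/N'\cong_G N$ are chief factors. Fixing a chief series of $G/P$ and lifting it, I build two auxiliary chief series $\{1\}<N<P<\cdots<G$ and $\{1\}<N'<P<\cdots<G$ that agree above $P$. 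Each original series shares its bottom term with one of the auxiliary ones, so by the case already treated it is enough to compare the two auxiliary series; as they agree above $P$ this reduces to the following symmetry statement: \emph{for distinct minimal normal subgroups $X,Y$ of a finite solvable group $H$, the unordered pair formed by the number of complements of $X$ in $H$ and the number of complements of $XY/X$ in $H/X$ is unchanged when $X$ and $Y$ are interchanged.}

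When $X\not\cong_H Y$ this has a clean proof. A minimal normal subgroup of a solvable group has a complement precisely when it is not contained in the Frattini subgroup, and $X\leq\Phi(H)$ is equivalent to $XY/Y\leq\Phi(H/Y)$ (one implication is standard, the other a short argument with maximal subgroups that uses $\operatorname{Hom}_H(Y,X)=0$), so the two counts vanish simultaneously. When they do not vanish, the number of complements of an abelian chief factor is the order of its group of $1$-cocycles, so the number of complements of $X$ in $H$ equals $|X:C_X(H)|\cdot|H^1(H/X,X)|$ and that of $XY/Y\cong_H X$ in $H/Y$ equals $|X:C_X(H)|\cdot|H^1(H/XY,X)|$. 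Since $[X,Y]\leq X\cap Y=\{1\}$, the subgroup $XY/X$ acts trivially on $X$, so $X$ is inflated from $H/XY$ as an $H/X$-module and the inflation--restriction sequence yields an exact sequence $0\to H^1(H/XY,X)\to H^1(H/X,X)\to\operatorname{Hom}_H(Y,X)$; the last term vanishes because $X\not\cong_H Y$, so the two cohomology groups have equal order and we obtain the cross-match ``number of complements of $X$ in $H$'' $=$ ``number of complements of $XY/Y$ in $H/Y$''. Interchanging $X$ and $Y$ gives the other cross-match, which is exactly the required symmetry.

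The main obstacle is the symmetry statement when $X\cong_H Y$. Then $XY$ is a sum of two $H$-isomorphic chief factors, $H$ may possess many minimal normal subgroups inside $XY$, and there need be no automorphism of $H$ interchanging $X$ and $Y$ --- indeed $H/X$ and $H/Y$ can be non-isomorphic groups --- so symmetry is not available directly; moreover all of $X$, $Y$, $XY/X$, $XY/Y$ now have the same $H$-isomorphism type, and what is really at stake is that the multiset of complement-numbers of the chief factors of that type is a chief-series invariant. This is the point at which the theory of crowns is needed: the complemented chief factors $H$-isomorphic to a fixed type are governed by the corresponding crown, whose structure does not depend on the chosen chief series, and from it one recovers both their number and the multiset of their complement-numbers. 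Here I would invoke \cite[Theorem~9.13]{dh} together with \cite[Satz~3]{gasch}, as carried out in \cite{barnes}; inserting this into the induction above completes the proof.
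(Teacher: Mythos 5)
The paper does not actually prove this proposition: it states it and refers the reader to \cite[Theorem 9.13]{dh}, \cite[Satz 3]{gasch} and \cite{barnes}. So the comparison is with a citation, not with an argument, and your proposal should be judged on whether it constitutes an independent proof. It does not, but the parts you do argue are correct. The Jordan--H\"older-style induction validly reduces the proposition to the symmetry statement for two distinct minimal normal subgroups $X,Y$ of $H$, and your treatment of the case $X\not\cong_H Y$ checks out: the simultaneous vanishing of the two counts via the Frattini argument (which indeed needs $\operatorname{Hom}_H(Y,X)=0$ to exclude a maximal subgroup complementing both $X$ and $Y$, since such a subgroup would force $X\cong_H Y$), the identity ``number of complements of a complemented abelian chief factor $A$'' $=|Z^1(H/A,A)|=|A:C_A(H)|\cdot|H^1(H/A,A)|$, and the inflation--restriction sequence identifying $H^1(H/XY,X)$ with $H^1(H/X,X)$ are all sound. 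Your warning that no symmetry between $X$ and $Y$ is available in the homogeneous case is also accurate: for $H=C_p\times C_{p^2}$ with $X=\Phi(C_{p^2})$ and $Y=C_p\times\{1\}$ one has $H/X\cong C_p\times C_p$ and $H/Y\cong C_{p^2}$.

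The gap is that the case $X\cong_H Y$ is precisely where the content of the proposition lives --- the inhomogeneous case is the easy half --- and at that point you invoke the very same three references that the paper cites for the entire statement. What you have, therefore, is a correct reduction of the proposition to its homogeneous case plus a self-contained proof of the inhomogeneous case, not a proof; and since \cite{barnes} establishes the full statement in one stroke, routing through your reduction and then citing it anyway gains nothing logically. To make the argument genuinely self-contained you would need to supply the counting for a homogeneous component $XY$ with $X\cong_H Y$ (Barnes's own argument at this point is short and elementary, working with the complemented quotient $XY/(XY\cap\Phi(H))$ and counting complements of the two factors directly); as written, that step is missing.
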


The following result is well-known, see for example \cite[Lemmas 4.20 and 8.4]{dh}.

\begin{lemma} \label{mnmax}
Let $G$ be a solvable group and let $N$ be a minimal normal subgroup of $G$. Then $N$ is an elementary abelian $p$-group for some prime $p$ and if $N$ is complemented then the complements of $N$ in $G$ are precisely the maximal subgroups of $G$ that do not contain $N$.
\end{lemma}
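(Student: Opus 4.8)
The plan is to prove the two assertions in turn, each by a short structural argument; both rest on the defining property of a minimal normal subgroup, but the second one additionally needs the conclusion of the first.

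\emph{$N$ is elementary abelian.} Since $G$ is solvable, so is the nontrivial subgroup $N$, hence $N' \lneq N$. As $N'$ is characteristic in $N$, it is normal in $G$, so minimality of $N$ forces $N' = 1$, i.e.\ $N$ is abelian. Now fix a prime $p$ dividing $|N|$ and set $N[p] = \{x \in N : x^p = 1\}$. This is a nontrivial subgroup of $N$ (it contains an element of order $p$) and, $N$ being abelian, it is characteristic in $N$, hence normal in $G$; minimality gives $N[p] = N$, so $N$ is an elementary abelian $p$-group.

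\emph{Complements of $N$ are the maximal subgroups avoiding $N$.} The observation I would use repeatedly is: if $M \leq G$ satisfies $MN = G$, then $M \cap N \trianglelefteq G$. Indeed $M \cap N$ is normalized by $M$ and, since $N$ is abelian, by $N$; the set of elements normalizing $M \cap N$ is a subgroup containing $M$ and $N$, hence containing $MN = G$. So by minimality $M \cap N \in \{1, N\}$, and $M \cap N = N$ would give $N \leq M$ and thus $M = MN = G$; in particular, if $M \lneq G$ and $MN = G$ then $M \cap N = 1$. Granting this, suppose first that $K$ is a complement of $N$, so $KN = G$ and $K \cap N = 1$; in particular $N \not\subseteq K$. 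If $K \leq M \leq G$ then $MN \supseteq KN = G$, so $MN = G$, and Dedekind's modular law gives $M = M \cap KN = K(M \cap N)$; if moreover $M \lneq G$ the observation yields $M \cap N = 1$ and hence $M = K$. Thus $K$ is a maximal subgroup not containing $N$. Conversely, let $M$ be a maximal subgroup with $N \not\subseteq M$. Then $M \lneq MN$, so maximality forces $MN = G$, and the observation gives $M \cap N = 1$, so $M$ is a complement of $N$.

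The argument is essentially routine, and I do not anticipate a real obstacle; the one point worth flagging is that the normality of $M \cap N$ genuinely uses the abelianness of $N$ established in the first part, so the two halves of the lemma must be carried out in this order, and one must be careful to apply Dedekind's modular law only with $K$ (or $M$) containing the subgroup that gets factored out.
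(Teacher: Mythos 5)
Your proof is correct. The paper itself gives no argument for this lemma --- it only cites \cite{dh} --- and what you have written is exactly the standard textbook proof: minimality kills the characteristic subgroups $N'$ and $N[p]$ to give elementary abelianness, and the key observation that $M\cap N\trianglelefteq G$ whenever $MN=G$ (which genuinely needs $N$ abelian, as you note) combined with Dedekind's law handles both directions of the second claim. Nothing is missing.
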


Before considering Frobenius groups we need a lemma.

\begin{lemma} \label{cfcyc}
A finite solvable group $G$ is cyclic if and only if every chief factor of $G$ has $0$ or $1$ complements.
\end{lemma}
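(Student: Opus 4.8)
The plan is to prove both directions by relating cyclicity of a solvable group to the complement-counting invariant supplied by the Gasch\"utz--Carter--Fischer--Hawkes--Barnes result quoted above. For the forward direction, suppose $G$ is cyclic, say $G\cong C_n$ with $n=p_1^{a_1}\cdots p_k^{a_k}$. Every subgroup of $G$ is cyclic, and in particular every quotient and every minimal normal subgroup of a quotient is cyclic of prime order. So a chief factor $K_i/K_{i-1}$ has order some prime $p$, and a complement $L/K_{i-1}$ would be a subgroup of the cyclic group $G/K_{i-1}$ of index $p$; in a cyclic group there is at most one subgroup of each given order, hence at most one such $L$. Thus every chief factor of $G$ has $0$ or $1$ complements. (Here I use Lemma~\ref{mnmax} only to know chief factors are elementary abelian, though for cyclic $G$ this is immediate.)

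For the converse, I would argue by induction on $|G|$, or equivalently by walking down a chief series. Suppose every chief factor of $G$ has $0$ or $1$ complements; I want $G$ cyclic. Pick a minimal normal subgroup $N$, which by Lemma~\ref{mnmax} is elementary abelian of order $p^d$ for some prime $p$ and integer $d\geq 1$. The hypothesis is inherited by $G/N$ (its chief factors sit inside a chief series of $G$ refining $N\lhd G$, and their complement counts are among those of $G$), so by induction $G/N$ is cyclic. Now two things must be shown: first that $d=1$, i.e. $N\cong C_p$; and second that $N$ is either not complemented or sits on top in a way forcing the whole group to be cyclic. If $N$ is complemented, Lemma~\ref{mnmax} identifies its complements with the maximal subgroups of $G$ not containing $N$; since $G/N$ is cyclic it has a unique maximal subgroup $M/N$ for each prime divisor of $|G/N|$, and counting preimages/complements one finds that having more than one complement is equivalent to $d\geq 2$ or to $p\mid |G/N|$ — in either subcase the hypothesis "$0$ or $1$ complements'' is violated. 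Hence $N$ is either non-complemented with $d=1$, or complemented with $d=1$ and $\gcd(p,|G/N|)=1$; in the first case $G$ is cyclic because a cyclic-by-$C_p$ group with the $p$-part untouched and $N$ central... — more carefully, since $N\cong C_p$ is normal and $G/N$ cyclic, $G$ is metacyclic, and one checks that the only obstruction to $G$ being cyclic is exactly the presence of a noncyclic Sylow subgroup, which would furnish a chief factor with more than one complement. So in all surviving cases $G$ is cyclic.

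The main obstacle is the second direction: turning "$0$ or $1$ complements for every chief factor'' into cyclicity requires carefully handling two independent ways a group can fail to be cyclic — a chief factor of rank $d\geq 2$ (giving $p^d-1$ over $p-1$ complements, well more than one), and a prime $p$ dividing $|G|$ to a power at least $2$ with a noncyclic Sylow, which in a chief series shows up as a $C_p$ chief factor admitting a nontrivial complement beyond the "obvious'' zero. I would isolate this as the crux: show that if $G$ is noncyclic solvable, some chief factor $K_i/K_{i-1}$ has $\geq 2$ complements. Equivalently, pass to a quotient $\bar G$ of $G$ that is a minimal noncyclic solvable group — these are well understood (either $C_p\times C_p$, or a nonabelian group of order $pq$, or $C_{p^2}$... the last is cyclic, so: $C_p\times C_p$ or nonabelian $C_q\rtimes C_p$) — and in each such $\bar G$ exhibit a chief factor with two or more complements: in $C_p\times C_p$ the bottom factor $C_p$ has $p\geq 2$ complements, and in $C_q\rtimes C_p$ the factor $C_q$ has $q$ complements (the $q$ Sylow $p$-subgroups). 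Since complement counts only persist up the quotient, $G$ itself then has a chief factor with $\geq 2$ complements, completing the contrapositive. The only mild care needed is the reduction to a minimal noncyclic quotient and the verification that the relevant complement count survives pulling back along $G\twoheadrightarrow\bar G$, both of which follow from the invariance proposition stated just above.
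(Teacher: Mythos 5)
Your forward direction is fine (and in fact a little more direct than the paper's: a complement of a chief factor of a cyclic group would be a subgroup of prescribed index in a cyclic quotient, and there is at most one such subgroup). The converse, however, has a genuine gap at its central step. The claimed equivalence ``$N$ has more than one complement if and only if $d\geq 2$ or $p\mid |G/N|$'' is false: in $G=S_3$ the minimal normal subgroup $N\cong C_3$ has $d=1$ and $3\nmid |G/N|=2$, yet $N$ has three complements (the subgroups of order $2$). Consequently your case analysis lets every nontrivial semidirect product $C_p\rtimes L$ with $L$ cyclic of coprime order (e.g.\ $S_3$ or $C_7\rtimes C_3$) survive into the branch where you assert $G$ is cyclic, and the supporting claim that ``the only obstruction to a metacyclic group being cyclic is a noncyclic Sylow subgroup'' fails for exactly these groups. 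The missing idea --- which is the heart of the paper's proof --- is that the set of complements of $N$ is closed under conjugation, so if $N$ has \emph{exactly one} complement $L$ then $L$ is normal and $G\cong N\times L$; this is what kills nontrivial actions, forces $N$ to be central (hence $d=1$), and reduces everything to direct products of coprime cyclic groups. (The paper also first reduces to $\Phi(G)=\{1\}$, so that the ``$0$ complements'' case cannot occur for the chosen $N$; your non-complemented branch, where $d=1$ is asserted without justification, is handled there by the Frattini argument rather than by any claim about $d$.)

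Your fallback in the second paragraph does not repair this. The solvable groups all of whose proper quotients are cyclic are not only $C_p\times C_p$ and the nonabelian groups of order $pq$: the list also contains $V\rtimes C$ with $V$ an irreducible faithful module of rank at least $2$ for a cyclic group $C$, for instance $A_4$. In these extra cases the bottom chief factor does have more than one complement (its complements are the $|V|$ conjugates of $C$), so the contrapositive strategy could be completed, but as written the classification is both unproved and incomplete, and it is carrying the entire weight of the argument.
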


\begin{proof}
If $G$ is cyclic then it is a direct product $C_{{p_1}^{l_1}} \times \cdots \times C_{{p_t}^{l_t}}$, the chief factors of $G$ are the chief factors of its direct factors, which are cyclic groups of prime power order. But cyclic $p$-groups only have one chief series, all of its chief factors have $0$ complements except the top one, that has $1$ complement.

We show the converse by induction on $|G|$. Denote by $\Phi(G)$ the Frattini subgroup of $G$. If $G/\Phi(G)$ is cyclic then $G$ is cyclic, and the chief factors of $G/\Phi(G)$ are in particular chief factors of $G$, so we may assume $\Phi(G)=\{1\}$. Let $N$ be a minimal normal subgroup of $G$. Then $N$ has precisely one complement: if $N$ had no complement then $N \subseteq \Phi(G)$ which cannot be since $\Phi(G)=\{1\}$. Call $L$ the complement of $N$; since the conjugates of $L$ in $G$ are also complements of $N$ it follows that $L$ is normal in $G$ and $G \cong N \times L$. Since $G$ is solvable, $N$ is abelian by Lemma \ref{mnmax}, hence $N$ is a central minimal normal subgroup of $G$ so $N \cong C_p$ for some prime $p$. By induction, $L$ is cyclic. Since $G \cong C_p \times L$, to conclude that $G$ is cyclic it is enough to show that $p$ does not divide $|L|$. If this was not the case we could write $G = C_p \times C_{p^l} \times H$ for some $l \geq 1$ and for some cyclic subgroup $H$, so $G$ would have a quotient isomorphic to $C_p \times C_p$. However in this group the chief factor $C_p \times \{1\}$ has $p$ complements, so more than one, a contradiction.
\end{proof}

Recall that a Frobenius group is a group $G$ with a proper subgroup $H$ with the property that $H \cap H^g = \{1\}$ whenever $g \in G-H$, so that $H$ is self-normalizing in $G$. By a celebrated theorem by Frobenius, there exists a normal subgroup $K$ in $G$ such that $KH=G$ and $K \cap H = \{1\}$, making $G$ a semidirect product $K \rtimes H$. Moreover $K$ and $H$ have coprime orders and $K$ equals the Fitting subgroup of $G$, in particular it is nilpotent. The subgroup $K$ is called the Frobenius kernel of $G$ and the subgroup $H$ is called a Frobenius complement of $G$. All the complements of $K$ in $G$ are conjugate. For other basic properties of Frobenius groups we refer to \cite[Chapter 37]{kar}.

\begin{prop} \label{mainfrob}
Let $G$ be a Frobenius group. The following are equivalent.
\begin{enumerate}
\item $\rho(G)=\sigma(G)$.
\item The Frobenius kernel is a minimal normal subgroup of $G$ and the Frobenius complements are cyclic.
\end{enumerate}
\end{prop}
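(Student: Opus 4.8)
The plan is to prove the two implications separately. Direction $(2)\Rightarrow(1)$ is handled by exhibiting an explicit partition together with Tomkinson's theorem; direction $(1)\Rightarrow(2)$ is proved by contradiction, balancing an upper bound for $\sigma(G)$ coming from quotients and chief factors against the lower bound $\rho(G)\ge 1+\lceil\sqrt{|G|}\,\rceil$ of Corollary \ref{sqrt}.

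For $(2)\Rightarrow(1)$, write $G=K\rtimes H$. Since $K$ is a nilpotent minimal normal subgroup it is elementary abelian, and with $H$ cyclic the group $G$ is solvable. The $|G:H|=|K|$ conjugates of $H$ are precisely the complements of $K$; they meet each other trivially (Frobenius property) and meet $K$ trivially (coprime orders), so a count of elements shows $\{K\}\cup\{H^g\}$ is a partition of $G$ of size $1+|K|$, whence $\rho(G)\le 1+|K|$. Refine $1<K<G$ to a chief series: $K$ is a chief factor with $|K|\ge 2$ complements, while every chief factor above $K$ is a chief factor of the cyclic group $G/K\cong H$, hence has at most one complement by Lemma \ref{cfcyc}. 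By the invariance of the complement-counts and Tomkinson's theorem, $\sigma(G)=|K|+1=\rho(G)$.

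For $(1)\Rightarrow(2)$, the partition above still gives $\rho(G)\le 1+|K|$. Assume $\rho(G)=\sigma(G)$ and suppose first that $G$ is solvable, so $\sigma(G)=q+1$ with $q$ the least order of a chief factor of $G$ with more than one complement. The key observation, proved from the coprime fixed-point-free action of $H$, is that \emph{every} chief factor $V=K_i/K_{i-1}$ with $K_i\le K$ has at least $|V|\ge 2$ complements: $K$ is complemented in $G$ (by the $H^g$), so $V$ is complemented, and since the image of $H$ in $G/K_i$ has no nonzero fixed points on $V$ one gets $C_V(G/K_i)=1$, so the conjugation action of $V$ on the set of complements of $V$ has all orbits of size $|V|$. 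Hence, if $K$ is not minimal normal, a chief series through $K$ has $r\ge 2$ factors inside $K$, each doubly complemented, whose orders multiply to $|K|$; the smallest has order $\le|K|^{1/r}\le\sqrt{|K|}$, so $q\le\sqrt{|K|}<\sqrt{|G|}$. And if $K$ is minimal normal but $H$ is not cyclic, then $K$ is a chief factor with $\ge 2$ complements and $G/K\cong H$ contributes (Lemma \ref{cfcyc}) a chief factor of some order $r<|H|$ with $\ge 2$ complements, so $q^2\le|K|\,r<|K|\,|H|=|G|$. In either case $q<\sqrt{|G|}$, hence $\sigma(G)=q+1\le\lceil\sqrt{|G|}\,\rceil<1+\lceil\sqrt{|G|}\,\rceil\le\rho(G)$, contradicting $\rho(G)=\sigma(G)$. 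Therefore $K$ is minimal normal and $H$ is cyclic.

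Finally, if $G$ is not solvable then $(2)$ automatically fails, and $H$, being a non-solvable Frobenius complement, contains a copy of $\SL_2(5)$, which thus lies in $G$; since $\SL_2(5)$ has a unique involution and is generated by its elements of even order, every block of a partition of $G$ meeting that involution contains all of $\SL_2(5)$, so $\rho(G)\ge 1+120$ by Lemma \ref{trick}. On the other hand $\sigma(G)\le\sigma(G/K)=\sigma(H)$, and by the classification of Frobenius complements (Zassenhaus) $H$ has a quotient isomorphic to $A_5$, $S_5$ or $A_5\times C_2$, so $\sigma(H)\le 16<\rho(G)$; hence $\rho(G)\ne\sigma(G)$. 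The step I expect to be the main obstacle is exactly the lower bound in $(1)\Rightarrow(2)$: one must make $\rho(G)$ beat the quotient/chief-factor estimates for $\sigma(G)$, and what makes Corollary \ref{sqrt} just sufficient is the ``double complementation'' of chief factors inside $K$, a consequence of fixed-point-freeness. (A uniform bound $\rho(G)\ge|K|+1$ valid for all Frobenius groups would simplify the argument, but appears to be unavailable.)
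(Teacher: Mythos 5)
Your direction $(2)\Rightarrow(1)$ is correct and in fact slightly more streamlined than the paper's (you get $\rho(G)\le 1+|K|$ from the standard kernel-plus-complements partition and $\sigma(G)=1+|K|$ from Tomkinson plus Lemma \ref{cfcyc}, so equality is forced by $\sigma\le\rho$), and your treatment of the nonsolvable case, via the unique involution of $\SL_2(5)$ and Lemma \ref{trick}, is a clean alternative to the paper's order estimate. The genuine gap is in the solvable case of $(1)\Rightarrow(2)$, namely your ``key observation'' that every chief factor $V=K_i/K_{i-1}$ with $K_i\le K$ has at least $|V|$ complements. The inference ``$K$ is complemented, so $V$ is complemented'' is false: complementation does not descend to chief factors lying below a complemented normal subgroup. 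Concretely, take $G=D_{18}=C_9\rtimes C_2$, a Frobenius group with kernel $K=C_9$. Here $K$ is complemented (by the nine reflections), but the chief factor $C_3<K$ lies in $\Phi(G)$ -- every maximal subgroup of $D_{18}$ contains the unique subgroup of order $3$ -- and hence has \emph{no} complement. Frattini chief factors inside $K$ contribute nothing to $q$, so your bound $q\le\sqrt{|K|}$ in the case ``$K$ not minimal normal'' is unjustified: a priori the one non-Frattini chief factor $K/B$ inside $K$ could be large, leaving $q^2$ comparable to $|G|$ and destroying the intended contradiction with Corollary \ref{sqrt}. (The other subcase, $K$ minimal normal and $H$ noncyclic, is fine: $q\le|K|$ and $q\le r$ with $r$ dividing $|H|$ give $q^2<|G|$ by coprimality of $|K|$ and $|H|$.)

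This is precisely where the paper works hardest. After using $|G|\le q^2$ to obtain $G=P\rtimes T$ with $T$ cyclic and $P=K$, it isolates the unique chief factor $A/B=P/B$ with more than one complement, proves $B\le\Phi(G)$, and then argues on the partition itself: projecting a partition of size $q+1$ to $G/B$ forces $q$ of its members to lie over the cyclic Frobenius complements of $G/B$, whence $P\subseteq B\cup H_{q+1}$, so $P\le H_{q+1}$ and Lemma \ref{trick} gives $|P|\le q=|P/B|$, i.e.\ $B=\{1\}$. A purely numerical route of the kind you want does exist, but it must exploit the Frobenius action rather than complementation of the lower factors: $T$ acts fixed-point-freely on $B$, so $|T|$ divides $|B|-1$, and $T$ acts faithfully and irreducibly on $K/B$, so the multiplicative order of $p$ modulo $|T|$ equals $\log_p q$ and divides $\log_p|B|$; this forces $|B|\ge q$ and hence $|G|=|B|\cdot q\cdot|T|>q^2$ whenever $B\ne\{1\}$. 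Either way, the claim that all chief factors below $K$ are multiply complemented cannot stand and must be replaced.
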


\begin{proof}
Consider a Frobenius group $G=K \rtimes H$ where $K$ is the Frobenius kernel and $H$ is a Frobenius complement. Then $K \cap H^g = \{1\}$ for all $g \in G$ and $H^x \cap H^y = \{1\}$ whenever $xy^{-1} \not \in H$. In particular the union $K \cup \bigcup_{g \in G}H^g$ covers exactly $$|K|+|K|(|H|-1) = |G|$$ elements. Therefore such a union is a partition of $G$ with $|K|+1$ members (this of course is well-known).

\ 

Assume that $\sigma(G)=\rho(G)$. We claim that $G$ is solvable. To prove this assume by contradiction that $G$ is non-solvable. Since $K$ is nilpotent, $H$ is not solvable. By a theorem of Zassenhaus, who classified nonsolvable Frobenius complements (see \cite[Theorem 18.6]{passman}), $H$ must have a subgroup $H_0$ of index $1$ or $2$ such that $H_0 \cong S \times M$ with $S \cong \SL_2(5)$ and $M$ a group of order coprime to $|S|=120$. Since $(|S|,|M|) = 1$, $M$ is characteristic in $H_0$ hence normal in $H$, so $\sigma(G) \leq \sigma(H) \leq \sigma(H/M)$ and $H/M$ is either isomorphic to $S$ or to an extension $S.2$. Since $S/Z(S) \cong A_5$, $H/M$ has a quotient group isomorphic to $A_5$ or $S_5$. Using Corollary \ref{sqrt} we deduce that $$1+\sqrt{|G|} \leq \rho(G) = \sigma(G) \leq \max \{ \sigma(A_5),\sigma(S_5)\} = \sigma(S_5) = 16$$ hence $|G| \leq 225$. But since $120=|\SL_2(5)|$ divides $|G|$ it follows that $|G|=120$ hence $G=\SL_2(5)$, and this contradicts the fact that $G$ is a Frobenius group.

\ 

We deduced that $G$ is solvable, and it follows from Tomkinson's Theorem that $\sigma(G)=q+1$ where $q=p^r$ is the size of the smallest chief factor of $G$ with more than one complement. Therefore $\rho(G)=q+1$ and if $U_1 \cup U_2 \cup \ldots \cup U_t = G$ is a partition with $t=\rho(G)$ members, then, by Lemma \ref{trick}, we have $1+q = \rho(G) \geq 1+|U_i|$ hence $|U_i| \leq q$ for all $i=1,\ldots,t$, and by Corollary \ref{sqrt} we have $1+\sqrt{|G|} \leq \rho(G) = 1+q$ so $|G| \leq q^2$. Consider a fixed chief series of $G$. Since $|G| \leq q^2$ and $G$ is not a $p$-group (being $G$ a Frobenius group) it is clear that precisely one chief factor in the series has more than one complement. Therefore if a chief factor of $G$ is not a $p$-group then it is either Frattini or central, implying that $G$ is $\ell$-nilpotent for all prime divisor $\ell$ of $|G|$ different from $p$ (see for example \cite[Lemma 5]{irrcov}). Let $P$ be the intersection of the normal $\ell$-complements of $G$ for $\ell$ a prime divisor of $|G|$ different from $p$. Then $P$ is the unique Sylow $p$-subgroup of $G$ and the Schur-Zassenhaus theorem implies that $G = P \rtimes T$ for some $T \leq G$. Since $q$ is a power of $p$ and $p$ does not divide $|T|$, Lemma \ref{cfcyc} implies that $T$ is cyclic. Since $P$ is a normal nilpotent subgroup of $G$, $P$ is contained in the Fitting subgroup of $G$, which equals the Frobenius kernel $K$. If $P \neq K$ then $G/P$ is a Frobenius group, contradicting the fact that $G/P \cong T$ is cyclic. So $P=K$, $T$ is conjugate to $H$ and all the Frobenius complements are cyclic.

\ 

Let $A/B$ be the unique chief factor with more than one complement in a chief series of $G$ passing through $P$, so that $|A/B|=q$. We claim that $A=P$ and $B = \{1\}$, implying the result. The group $G/A$ is cyclic by Lemma \ref{cfcyc}, so $A$ cannot be properly contained in $P$ (because otherwise $G/A$ would be a Frobenius group), on the other hand $A/B$ is a $p$-group (it has order $q$) so $A=P$. We now prove that $B$ is contained in the Frattini subgroup of $G$, $\Phi(G)$. Let $\{1\} = L_1 < L_2 < \ldots < L_r = B$ be part of a chief series of $G$ passing through $B$. Since $G/L_i$ is a Frobenius group for $i=1,\ldots,r$ we have $Z(G/L_i)=\{1\}$ for $i=1,\ldots,r$ so the chief factors $L_i/L_{i+1}$ are all Frattini (they all have $0$ complements), indeed if one of them was not Frattini it would have exactly one complement hence it would be central. If $M$ is a maximal subgroup of $G$ then it contains $L_2 \leq \Phi(G)$ hence by induction $M/L_i$ contains $L_{i+1}/L_i$ for all $i=1,\ldots,r-1$, implying that $M \geq L_r = B$. This proves that $B \leq \Phi(G)$. We know that $\rho(G)=\sigma(G)=q+1$, let $\{H_1,\ldots,H_{q+1}\}$ be a partition of $G$. Since $B \leq \Phi(G)$ we have $H_iB \neq G$ for all $i=1,\ldots,q+1$, so the family $\{H_1B/B,\ldots,H_{q+1}B/B\}$ is a cover of $G/B$, and the $q$ Frobenius complements of $G/B$ are cyclic maximal subgroups of the form $\langle x \rangle B/B$ where $x$ generates a Frobenius complement of $G$. It follows that up to reordering $H_iB=\langle x_i \rangle B$ for $i=1,\ldots,q$ where $x_i$ generates a Frobenius complement of $G$ for $i=1,\ldots,q$, in particular $B$ is the unique Sylow $p$-subgroup of $H_iB$, and this implies $P = G \cap P = \bigcup_{i=1}^{q+1} (H_i \cap P) \subseteq B \cup H$ where $H=H_{q+1}$. Since no group is the union of two proper subgroups, we deduce that $P \leq H$, so, by Lemma \ref{trick}, $1+q =\rho(G) \geq 1+|H| \geq 1+|P|$ implying $|P| \leq q=|P/B|$ so $B=\{1\}$.

\ 

Conversely, assume that the Frobenius kernel $K$ is a minimal normal subgroup of $G$ and that the Frobenius complements are cyclic. We prove that $\sigma(G)=\rho(G)$. Observe that $H$ is a maximal subgroup of $G$ by Lemma \ref{mnmax}, so all the Frobenius complements are maximal subgroups of $G$. Let $\mathscr{P}$ be any partition of $G$ of size $\rho(G)$. If $\langle x_1 \rangle, \ldots, \langle x_t \rangle$ are the Frobenius complements then for each $x_i$ exactly one member of $\mathscr{P}$ contains $x_i$, therefore it is equal to $\langle x_i \rangle$ since such subgroup is maximal. On the other hand $(\langle x_1 \rangle \cup \ldots \cup \langle x_t \rangle) \cap K = \{1\}$, therefore $\rho(G) = |\mathscr{P}| \geq t+1 = |K|+1$. Since a partition of $G$ is given by the Frobenius kernel and the Frobenius complements it follows that $\rho(G)=|K|+1$. Since $G/K$ is cyclic, $K$ is the unique chief factor of $G$ with more than one complement in a chief series of $G$ passing through $K$, so Tomkinson's theorem implies $\sigma(G) = |K|+1 = \rho(G)$.
\end{proof}

\section{Groups of Hughes-Thompson type} \label{sectht}

A group of Hughes-Thompson type relative to the prime $p$ is a group $G$ such that $G$ is not a $p$-group and the subgroup generated by the elements of order different from $p$ $$H_p(G) = \langle x \in G\ :\ x^p \neq 1 \rangle$$is not equal to $G$. For such a group $|G:H_p(G)| = p$ and $H_p(G)$ is a normal nilpotent subgroup of $G$ (see \cite{bryce}). Observe that the class of groups of Hughes-Thompson type is not contained in the class of Frobenius groups, consider as an example the dihedral group of order $12$.

\begin{prop}
Let $G$ be a group of Hughes-Thompson type relative to the prime $p$. Then we have the following.
\begin{enumerate}
\item If $G$ is not a Frobenius group then $H=H_p(G)$ is a member of every partition of $G$ and $\rho(G)=|H|+1$.
\item $\sigma(G)=\rho(G)$ if and only if $G$ is a Frobenius group with Frobenius kernel a minimal normal subgroup and Frobenius complement of order $p$.
\end{enumerate}
\end{prop}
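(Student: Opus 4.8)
The plan is to hang everything on one observation: every element of $G\setminus H$ has order $p$, since $x^{p}\neq 1$ would place $x$ among the generators of $H=H_{p}(G)$. Hence the $(p-1)|H|$ elements of $G\setminus H$ fall into exactly $|H|$ subgroups of order $p$, each meeting $H$ trivially, so $\{H\}\cup\{\langle x\rangle:x\in G\setminus H\}$ is a partition of $G$ with $|H|+1$ members and $\rho(G)\le|H|+1$ in all cases. For part (1) this reduces the problem to showing that $H$ lies in \emph{every} partition of $G$: once that is known, Lemma \ref{trick} applied to $H$ (normal of prime index $p$) forces every partition to have precisely $|H|+1$ members, giving $\rho(G)=|H|+1$ at the same time.

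To show $H$ lies in every partition when $G$ is not a Frobenius group, I would first observe that such a $G$ must satisfy $p\mid|H|$: if $p\nmid|H|$, then the $|H|$ subgroups of order $p$ above are precisely the Sylow $p$-subgroups of $G$, so $n_{p}(G)=|H|$, whence $|N_{G}(\langle t\rangle)|=|G|/|H|=p$ for a fixed $t\in G\setminus H$ (of order $p$) and $C_{H}(t)=C_{G}(t)\cap H\le N_{G}(\langle t\rangle)\cap H=1$; then $G=H\rtimes\langle t\rangle$ with $\langle t\rangle\cong C_{p}$ acting fixed-point-freely on $H$, so $G$ is Frobenius with kernel $H$, a contradiction. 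So assume $p\mid|H|$. Write the nilpotent group $H$ as $Q\times R$ with $Q$ its nontrivial Sylow $p$-subgroup and $R$ its Hall $p'$-subgroup, both normal in $G$. The subgroup $W=\Omega_{1}(Z(Q))$ is a nontrivial elementary abelian $p$-group normal in $G$, so the $p$-group $\langle t\rangle$ has a nonzero fixed point $z\in W$; as $z$ centralizes $Q$, $R$ and $t$, it centralizes $G$, so $z\in Z(G)\cap H$ has order $p$. Now let $\mathscr P$ be any partition and $M\in\mathscr P$ the member containing $z$; note $R\neq 1$, since $H$ is not a $p$-group (else $G=H\langle t\rangle$ would be). For $1\neq r\in R$, picking a prime $\ell\neq p$ dividing $|r|$ and setting $r_{1}=r^{|r|/\ell}$, the cyclic group $\langle r_{1}z\rangle\cong C_{\ell p}$ contains $\langle r_{1}\rangle$ and $\langle z\rangle$, so the member through $r_{1}z$ contains $z$, hence equals $M$, hence contains $r_{1}$; then the member through $r$ contains $\langle r\rangle\ni r_{1}\in M$, so equals $M$, giving $r\in M$. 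Thus $R\le M$, and the same device with $\langle qr\rangle$ (for $q\in Q$ and a fixed $1\neq r\in R\le M$) yields $Q\le M$. Hence $M\ge\langle Q,R\rangle=H$, and since $M$ is proper and $H$ maximal, $M=H\in\mathscr P$.

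For part (2), the implication $(\Leftarrow)$ is immediate from Proposition \ref{mainfrob}, a Frobenius group with minimal normal kernel and complement of order $p$ having cyclic complement. For $(\Rightarrow)$, suppose $\sigma(G)=\rho(G)$. If $G$ is Frobenius, say $G=K\rtimes C$, then $H_{p}(G)$ is nilpotent and normal, hence lies in the Fitting subgroup $K$, so $|C|=|G:K|\le|G:H_{p}(G)|=p$, forcing $|C|=p$; as $C$ is cyclic, Proposition \ref{mainfrob} makes $K$ a minimal normal subgroup, which is the assertion. If $G$ is not Frobenius, part (1) gives $\sigma(G)=\rho(G)=|H|+1$, so by Tomkinson's theorem the smallest order of a chief factor of $G$ with more than one complement is $|H|$; taking such a factor $A/B$ inside a chief series and using $|G:A|\cdot|B|=|G|/|A/B|=p$: the factor cannot be the top one of the series (that factor has a unique complement), so $A\neq G$, whence $B=1$ and $A$ is a minimal normal subgroup of $G$ of order $|H|$ and index $p$. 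Then $A$ is an elementary abelian $\ell$-group with $\ell\neq p$ (else $|G|=p|A|$ is a prime power and $G$ a $p$-group, contrary to $G$ being of Hughes--Thompson type), so $p\nmid|H|$ — contradicting the fact established at the start of part (1) that a non-Frobenius group of Hughes--Thompson type has $p\mid|H|$. Hence this case cannot occur, and the proof is complete.

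The step I expect to be the main obstacle is the middle of part (1), the passage from ``$\mathscr P$ is a partition'' to ``$H\in\mathscr P$''. What makes it work is the joint use of a central element $z$ of order $p$ inside $H$ and a nontrivial $p'$-element $r$ of $H$: these let one sweep every $p$-element and every $p'$-element of $H$ into the single member of $\mathscr P$ through $z$ via the cyclic subgroups $\langle r_{1}z\rangle$ and $\langle qr\rangle$. Producing the central element $z$ — and first reducing to the case $p\mid|H|$ so that it exists — is the delicate point.
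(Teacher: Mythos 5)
Your proof is correct, and for part (1) it takes a genuinely different route from the paper. The paper argues by contraposition: if some partition $\mathscr{P}$ omits $H$, then intersecting its members with $H$ yields a partition of the nilpotent group $H$, so by the Baer--Kegel--Suzuki classification $H$ is an $r$-group for a single prime $r\neq p$, and then the defining property of $H_p(G)$ forces $C_H(x)=1$ for every nontrivial $p$-element $x$, making $G$ Frobenius. You instead argue directly: you first show that a non-Frobenius group of Hughes--Thompson type must have $p\mid |H|$ (via the Sylow count $n_p=|H|$ and the resulting fixed-point-free action when $p\nmid|H|$), then manufacture a central element $z$ of order $p$ in $\Omega_1(Z(Q))$, and use commuting elements of coprime orders to sweep all of $R$ and then all of $Q$ into the single partition member containing $z$, which must therefore be $H$ by maximality. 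Your route avoids invoking the classification of partitionable groups and, as a byproduct, makes explicit the clean fact that a Hughes--Thompson group is Frobenius precisely when $p\nmid|H_p(G)|$ (equivalently $Z(G)=1$); the paper's route is shorter because the classification is already the organizing principle of the whole article. For part (2) the two arguments are essentially the same (Tomkinson's theorem plus the observation that the distinguished chief factor cannot be the top one, so it is a minimal normal subgroup of index $p$); you are somewhat more careful than the paper in justifying that the Frobenius complement has order exactly $p$, via $H_p(G)\leq F(G)=K$, a point the paper leaves implicit.
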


\begin{proof}
(1) The statement about $\rho(G)$ follows from Lemma \ref{trick}. Let $\mathscr{P}$ be a partition of $G$ and assume $H$ does not appear in $\mathscr{P}$. Intersecting the members of $\mathscr{P}$ with $H$ we find a partition of $H$, so $H$ is a partitionable nilpotent group, and by the classification of partitionable groups it follows that there exists a prime $r$ such that $H$ is an $r$-group, and $r \neq p$ because $G$ is not a $p$-group. Calling $P$ a Sylow $p$-subgroup of $G$ we have $G=H \rtimes P$. If $x \in P$ and $1 \neq y \in H$ are such that $xy=yx$ then $(xy)^p=y^p \neq 1$ being $|H|$ coprime to $p$, hence $xy \in H_p(G) = H$ by definition of $H_p(G)$, implying $x \in H \cap P$, so $x=1$. This proves that the centralizer $C_H(x)$ is trivial whenever $x \in P-\{1\}$, therefore $G$ is a Frobenius group with kernel $H$ and complement $P$.

(2) Since groups of order $p$ are cyclic, the implication $\Leftarrow$ is immediate from Proposition \ref{mainfrob}, now we prove the converse. Let $G$ be a group of Hughes-Thompson type relative to the prime $p$, and not a $p$-group, with the property that $\sigma(G)=\rho(G)$. If $G$ is a Frobenius group the result follows from Proposition \ref{mainfrob}, now assume $G$ is not a Frobenius group. Let $\mathscr{P}$ be a partition of $G$ of cardinality $\rho(G)$, then by (1) $H=H_p(G)$ belongs to $\mathscr{P}$ hence Lemma \ref{trick} implies $1+|H|=\rho(G)=\sigma(G)=1+q$ where $q$ is the unique smallest size of a chief factor of $G$ with more than one complement. It follows that $H$ is a minimal normal subgroup of $G$ of order $q$ and index $p$, hence $G$ is a Frobenius group, a contradiction.
\end{proof}

\section{$\PSL_2(q)$ and $\PGL_2(q)$} \label{sectlin}

In this section we prove Theorem \ref{linear}. We use Dickson's classification of the maximal subgroups of $\PSL_2(q)$ and $\PGL_2(q)$. It can be found in \cite{dickson}. Observe that if $g \in \GL_2(q)$ has characteristic polynomial $f(X)$ which is reducible then $g$ is contained in the stabilizer of a point of the projective line, which is a Frobenius group with Frobenius kernel elementary abelian of order $q$, and if $f(X)$ is irreducible then $\mathbb{F}_q[g]$ is a field of order $q^2$ so $g$ lies in a Singer cycle, that is, a cyclic subgroup of order $q^2-1$ corresponding to an element of $F=\mathbb{F}_{q^2}$ acting on $F$ by multiplication. This implies that the point stabilizers together with the Singer cycles form a cover of $\GL_2(q)$ and induce covers of $\PGL_2(q)$ and of $\PSL_2(q)$. Observe that the Singer cycles have order $q+1$ in $\PSL_2(q)$ when $q$ is even, $q+1$ in $\PGL_2(q)$ when $q$ is odd and $(q+1)/2$ in $\PSL_2(q)$ when $q$ is odd. Also, the family consisting of Singer cycles and the cyclic subgroups generated by the maximal semisimple (diagonalizable) elements consists of subgroups that have pairwise trivial intersection.

\ 

Suppose $q=2^f$. Let $G=\PSL_2(q) \cong \PGL_2(q) \cong \SL_2(q)$. The group $G$ has order $q(q^2-1)$, exponent $2(q^2-1)$, and it contains $q^2-1$ involutions. The maximal subgroups of $G$ are:
\begin{itemize}
\item ${C_2}^f \rtimes C_{q-1}$ (stabilizer of a point of the projective line), one conjugacy class - we will call such subgroup ``point stabilizer'';
\item $D_{2(q-1)}$ (normalizer of a cyclic subgroup generated by a semisimple element of order $q-1$), one conjugacy class;
\item $D_{2(q+1)}$ (Singer cycle normalizer), one conjugacy class;
\item $\PGL_2(q_0)$ where $q=q_0^r$ for some prime $r$ and $q_0 > 2$.
\end{itemize}

Before proceeding to the computation of $\rho(G)$ we need two observations.

\ 

We claim that there is a conjugate of $D=D_{2(q-1)}$ distinct from $D$ and intersecting $D$ non-trivially. We think of $G$ as of $\SL_2(q)$. Let $\alpha$ be a generator of the multiplicative group $\mathbb{F}_q^{\ast}$, and without loss of generality $$D = \langle \left( \begin{array}{cc} \alpha & 0 \\ 0 & \alpha^{-1} \end{array} \right),\ \left( \begin{array}{cc} 0 & 1 \\ 1 & 0 \end{array} \right) \rangle = \bigcup_{m=0}^{q-2} \left\{ \left( \begin{array}{cc} \alpha^m & 0 \\ 0 & \alpha^{-m} \end{array} \right),\ \left( \begin{array}{cc} 0 & \alpha^m \\ \alpha^{-m} & 0 \end{array} \right) \right\}.$$Let $t$ be an element of $\mathbb{F}_q$ different from $0$ and $1$, and let $g := \left( \begin{array}{cc} t & t+1 \\ 1 & 1 \end{array} \right) \in G$. Since $|\mathbb{F}_q^{\ast}|=q-1$ is odd there exists $s \in \mathbb{F}_q$ with $s^2=t(t+1)$. The conjugate $$g^{-1} \left( \begin{array}{cc} 0 & s \\ s^{-1} & 0 \end{array} \right) g = \left( \begin{array}{cc} 0 & st^{-1} \\ s^{-1}t & 0 \end{array} \right)$$is an involution belonging to $D \cap D^g$, however $D \neq D^g$ being $g \not \in D=N_G(D)$.

\ 

We claim that the intersection of any two maximal subgroups of type $D_{2(q+1)}$ has order $2$. Let $D$ be a maximal subgroup of $G$ of type $D_{2(q+1)}$. First observe that, since any two Singer cycles have trivial intersection, the intersection between any two conjugates of $D$ has order at most $2$. Let $d$ be the number of pairs $(H,x)$ where $H$ is a conjugate of $D$ and $x$ is an involution belonging to $H$. Since $D$ has $q(q-1)/2$ conjugates and each of them contains $q+1$ involutions, $d=(q+1)q(q-1)/2=|G|/2$. On the other, hand all the involutions of $G$ are pairwise conjugate, so each of them belongs to a constant number $c$ of conjugates of $D$, hence $|G|/2=d=c(q^2-1)$, so $c=q/2$. This means that every involution belongs to $q/2$ conjugates of $D$. Each of the $q+1$ involutions in $D$ belongs to $q/2-1$ conjugates of $D$ distinct from $D$, and no two involutions of $D$ belong to the same conjugate of $D$ distinct from $D$ because the intersection of $D$ with its distinct conjugates has order at most $2$. This means that $D$ has non-trivial intersection with $(q+1)(q/2-1)+1 = (q^2-q)/2$ of its conjugates, hence with all of its conjugates.

\ 

Now we prove that $\rho(G)=q^2+1$. A direct computation shows that any two Sylow $2$-subgroups of $G$ intersect trivially. A partition of $G$ consists of a point stabilizer $H$, all the cyclic subgroups of order $q-1$ not contained in $H$, all the cyclic subgroups of order $q+1$ and the $q$ Sylow $2$-subgroups not contained in $H$. Since the cyclic subgroups of order $q-1$ have normalizer of order $2(q-1)$ and the cyclic subgroups of order $q+1$ have normalizer of order $2(q+1)$ we deduce that $$\rho(G) \leq 1+\left( \frac{|G|}{2(q-1)}-q \right)+ \frac{|G|}{2(q+1)} +q = q^2+1.$$ We are left to prove that $\rho(G) \geq q^2+1$. Let $\mathscr{P}$ be a partition of $G$. We will show that $|\mathscr{P}| \geq q^2+1$.

If $\mathscr{P}$ contains some point stabilizer $H \cong {C_2}^f \rtimes C_{q-1}$ then any other member of the partition has order at most $|G:H|=q+1$ (see the proof of Lemma \ref{trick}). Since $2(q-1) > q+1$, and if $q={q_0}^r$ with $q_0 > 2$ and $r$ a prime then $\PGL_2(q_0)$ does not contain elements of order $q-1$ nor $q+1$, as a simple examination of the list of maximal subgroups shows, it follows that $\mathscr{P}$ must contain all of the cyclic subgroups of order $q-1$ not contained in $H$ and all of the cyclic subgroups of order $q+1$ (Singer cycles), which are all conjugate, this gives a total of $|G|/(2(q-1))-q+|G|/(2(q+1)) = q^2-q$ subgroups. Note that $G$ contains $q^2-1$ involutions and every proper subgroup of $G$ contains at most $q+1$ involutions, however any proper subgroup of $G$ that is not of the type $D_{2(q+1)}$ contains at most $q-1$ involutions. Call $n$ the number of subgroups in $\mathscr{P}$ containing involutions. Since any two $D_{2(q+1)}$ intersect non-trivially $\mathscr{P}$ contains at most one of them, so we must have $q+1+(n-1)(q-1) \geq q^2-1$ which implies $n \geq q+1$ (recalling that $q \geq 4$). Since $H$ contains involutions this implies that we need at least $q$ additional subgroups to cover the Sylow $2$-subgroups and together with $H$ this gives $|\mathscr{P}| \geq 1+q^2-q+q = q^2+1$.

If $\mathscr{P}$ does not contain any point stabilizer then to cover the elements of order $q-1$ and $q+1$ the partition $\mathscr{P}$ must contain all of the cyclic subgroups of order $q-1$ (or their normalizers), which are maximal in the point stabilizers they are contained in, and all of the Singer cycles (or their normalizers), this gives $|G|/(2(q+1))+|G|/(2(q-1))=q^2$ subgroups. Let $a$ be the number of dihedral groups $D_{2(q-1)}$ in $\mathscr{P}$, and let $b$ be the number of dihedral groups $D_{2(q+1)}$ in $\mathscr{P}$. If $\mathscr{P}$ equals the family of the $q^2$ subgroups listed above then we must have $a(q-1) + b(q+1) = q^2-1$. However since there are two $D_{2(q-1)}$ intersecting non-trivially we have $a \leq q$, and since any two $D_{2(q+1)}$ intersect non-trivially we have $b \leq 1$. If $b=0$ then $q^2-1 = a(q-1) \leq q(q-1)$, a contradiction. If $b=1$ then $a(q-1)+q+1=q^2-1$ implying $a=(q^2-q-2)/(q-1)$ contradicting the fact that $a$ is an integer, being $q \geq 4$. This implies that $|\mathscr{P}| \geq q^2+1$.

\ 

Let $G=\PGL_2(q)$ with $q=p^f \geq 5$ and $p$ an odd prime. The group $G$ has order $q(q^2-1)$ and exponent $p(q^2-1)/2$, and it contains $q^2-1$ elements of order $p$, all contained in its socle $\PSL_2(q)$. The maximal subgroups of $G$ not containing $\PSL_2(q)$ are

\begin{itemize}
\item ${C_p}^f \rtimes C_{q-1}$ (stabilizer of a point of the projective line), one conjugacy class - we will call such subgroup ``point stabilizer'';
\item $D_{2(q-1)}$ for $q \neq 5$ (normalizer of a cyclic subgroup generated by a semisimple element of order $q-1$), one conjugacy class;
\item $D_{2(q+1)}$ (Singer cycle normalizer), one conjugacy class;
\item $S_4$ for $3 < q=p \equiv \pm 3 \mod 8$;
\item $\PGL_2(q_0)$ for $q=q_0^r$ with $r$ a prime.
\end{itemize}

Consider a point stabilizer $H$, the $q(q+1)/2-q$ cyclic subgroups of order $q-1$ not contained in $H$, the $q(q-1)/2$ cyclic subgroups of order $q+1$ and the $q$ Sylow $p$-subgroups not contained in $H$. Such subgroups form a partition of $G$ hence $\rho(G) \leq q^2+1$. Since $\PSL_2(q)$ is a subgroup of $\PGL_2(q)$, and they are both partitionable, we deduce that $\rho(\PSL_2(q)) \leq \rho(\PGL_2(q)) \leq q^2+1$ hence if we prove that $q^2+1 \leq \rho(\PSL_2(q))$ we can deduce immediately that $\rho(\PSL_2(q))=\rho(\PGL_2(q))=q^2+1$, which will be true for $q \neq 5$. In the following we will prove the lower bound $q^2+1 \leq \rho(\PSL_2(q))$ for $q \geq 13$ and we will treat the cases $q=5$, $q=7$, $q=9$, $q=11$ separately.

\ 

Let $G=\PSL_2(q)$ with $q=p^f \geq 5$ and $p$ an odd prime. $G$ has order $q(q^2-1)/2$ and exponent $p(q^2-1)/4$. Its maximal subgroups are

\begin{itemize}
\item ${C_p}^f \rtimes C_{(q-1)/2}$ (stabilizer of a point of the projective line), one conjugacy class - we will call such subgroup ``point stabilizer'';
\item $D_{q-1}$ for $q \geq 13$ (normalizer of a cyclic subgroup generated by a semisimple element of order $q-1$), one conjugacy class;
\item $D_{q+1}$ for $q \neq 7,9$ (Singer cycle normalizer), one conjugacy class;
\item $\PGL_2(q_0)$ for $q={q_0}^2$;
\item $\PSL_2(q_0)$ for $q={q_0}^r$ where $r$ is an odd prime;
\item $A_4$, $S_4$ or $A_5$ for some values of $q$.
\end{itemize}

Suppose $q \geq 13$. We will prove that $q^2+1$ is a lower bound for $\rho(G)$ by working with the elements of order $(q-1)/2$, $(q+1)/2$ and $p$. Observe that $(q-1)/2 \geq 6$ being $q \geq 13$, so the subgroups $A_5$, $A_4$, $S_4$ do not contain elements of order $(q-1)/2$ nor $(q+1)/2$, and the same is true for the subgroups $\PGL_2(q_0)$, $\PSL_2(q_0)$ by simple examination of their maximal subgroups. Let $\mathscr{P}$ be a partition of $G$.

If $\mathscr{P}$ contains a point stabilizer $H={C_p}^f \rtimes C_{(q-1)/2}$ then any other subgroup in $\mathscr{P}$ has order at most $|G:H|=q+1$ (see the proof of Lemma \ref{trick}) so to cover the elements of order $(q-1)/2$ and $(q+1)/2$ we need the cyclic subgroups of order $(q-1)/2$ not contained in $H$ or their normalizers $D_{q-1}$ and the cyclic subgroups of order $(q+1)/2$ or their normalizers $D_{q+1}$. Observe that since $G$ contains $q^2-1$ elements of order $p$ and of the subgroups listed so far only $H$ contains elements of order $p$, precisely $q-1$ of them, we need $q$ additional subgroups to cover them: this is because being $q \geq 13$ every proper subgroup of $G$ contains at most $q-1$ elements of order $p$, including $A_4$, $S_4$ and $A_5$. Since $H$ contains $q$ cyclic subgroups of order $(q-1)/2$ this gives $|\mathscr{P}| \geq 1+q(q+1)/2-q+q(q-1)/2+q = q^2+1$.

If $\mathscr{P}$ does not contain any point stabilizer then to cover the elements of order $(q-1)/2$ and $(q+1)/2$ the partition $\mathscr{P}$ must contain all of the cyclic subgroups of order $(q-1)/2$, $(q+1)/2$ or their normalizers, this gives $q(q+1)/2+q(q-1)/2=q^2$ subgroups and they do not form a cover because they do not contain any element of order $p$, so $|\mathscr{P}| \geq q^2+1$ in this case too.

\ 

Before dealing with the exceptions we observe that if $\mathscr{P}$ is a partition of a group $G$ and $H \in \mathscr{P}$, $x \in G$ are such that some power $x^n$ is not $1$ and belongs to $H$ then $x \in H$. Indeed if $x$ did not belong to $H$ then $H$ would have nontrivial intersection with the member of $\mathscr{P}$ containing $x$. We will use this fact in the following discussions without mention. Also, we will use repeatedly that if $H,K$ are two distinct subgroups in a partition of $G$ then $|K| \leq |G:H|$ (see the proof of Lemma \ref{trick}), in particular a subgroup $H$ of $G$ cannot belong to any partition if $G$ contains elements of order larger than $|G:H|$ and not contained in $H$.

\begin{itemize}
\item $\rho(\PGL_2(5)) = 26 = 5^2+1$. We will think of $\PGL_2(5)$ as of the symmetric group $S_5$. The group $G$ has $25$ elements of order $2$, $20$ elements of order $3$, $30$ elements of order $4$, $24$ elements of order $5$ and $20$ elements of order $6$. The maximal subgroups of $G$ are of one of the following types: $A_5$ (alternating group of degree $5$), $S_4$ (point stabilizer), $S_3 \times S_2$ (maximal intransitive subgroup of type $(3,2)$) and $C_5 \rtimes C_4$ (Sylow $5$-subgroup normalizer, a Frobenius group of order $20$). Let $\mathscr{P}$ be a partition of $S_5$. We know that if $H,K \in \mathscr{P}$ are distinct then $|H| \leq |G:K|$, so $A_5$ and the point stabilizers $S_4$ do not belong to $\mathscr{P}$: $A_5 \not \in \mathscr{P}$ because of the $4$-cycles, and $S_4 \not \in \mathscr{P}$ because of the elements of order $6$. Suppose that $\mathscr{P}$ contains the normalizer of a Sylow $5$-subgroup, which is a maximal subgroup $H$ of type $C_5 \rtimes C_4$ and contains $10$ elements of order $4$. Since its order is $20$, and $20^2 > |S_5|$, every other member of $\mathscr{P}$ is not isomorphic to it, and it is not a point stabilizer as observed above, so it contains at most two $4$-cycles. This proves that the number of members of $\mathscr{P}$ containing $4$-cycles is $1+20/2=11$ or $30/2=15$ according to whether $\mathscr{P}$ contains $H$ or not. No one of them contains elements of order $6$, and any proper subgroup of $S_5$ contains at most two elements of order $6$, so we need at least $20/2=10$ of them. No one of the subgroups listed so far, except $H$, contains elements of order $5$, however any proper subgroup of $S_5$ different from $A_5$ contains at most $4$ elements of order $5$ so we need at least $24/4-1=5$ additional subgroups in the first case, and $24/4=6$ in the second case, giving $|\mathscr{P}| \geq 11+10+5=26$ in the first case and $|\mathscr{P}| \geq 15+10+6=31$ in the second case. This proves that $\rho(G) \geq 26$. On the other hand a partition of $G$ is given by one $C_5 \rtimes C_4$ (call it $H$), the $5$ $C_5$ not contained in $H$, the $10$ $C_4$ not contained in $H$ and the $10$ $C_6$. This gives the upper bound $\rho(G) \leq 26$ and proves the result.

\ 

\item $\rho(\PSL_2(7)) = 50 = 7^2+1$. Let $G=\PSL_2(7)$. The group $G$ has order $168 =2^3 \cdot 3 \cdot 7$ and contains $21$ elements of order $2$, $56$ elements of order $3$, $42$ elements of order $4$ and $48$ elements of order $7$. Every maximal subgroup of $G$ is isomorphic to either $S_4$ or to an extension $C_7 \rtimes C_3$ (the normalizer of a Sylow $7$-subgroup). Let $\mathscr{P}$ be a partition of $G$. If $\mathscr{P}$ contains an $S_4$ then each other member of $\mathscr{P}$ has order at most $|G|/24=7$ so we need to cover the remaining $42-6=36$ elements of order $4$ using $18$ cyclic subgroups of order $4$; this exceeds the number of elements of order $2$ left to cover, which is $12$, and this is a contradiction because each cyclic subgroup of order $4$ contains precisely one element of order $2$. We deduce that no member of $\mathscr{P}$ is maximal of type $S_4$, in particular the only subgroups in $\mathscr{P}$ that can cover elements of order $4$ are of type $C_4$ or $D_8$. Observe that $\mathscr{P}$ cannot contain any subgroup of type $D_8$ because every element of $G$ of order $2$ is the square of some element of order $4$ and $D_8$ contains elements of order $2$ that are not the square of any of its elements of order $4$. In particular $\mathscr{P}$ contains all the $21$ cyclic subgroups of order $4$, which partition the elements of order $2$ and $4$. We now need to cover the elements of order $3$ and $7$ using subgroups of odd order (given that the elements of order $2$ are already partitioned). If $\mathscr{P}$ does not contain any $C_7 \rtimes C_3$ then it contains all the subgroups of order $3$ and $7$, giving a total of $|\mathscr{P}|=21+28+8=57$. If $\mathscr{P}$ contains a $C_7 \rtimes C_3$ then it contains only one such subgroup given that its index is smaller than its order. To finish covering the elements of order $3$ and $7$ we need $21+7=28$ subgroups (of order $3$ and $7$), proving that $|\mathscr{P}|=21+1+28=50$. This proves that $\rho(G) \geq 50$. On the other hand a partition of $G$ is given by one $C_7 \rtimes C_3$ (call it $H$), the $21$ $C_3$ not contained in $H$, the $7$ $C_7$ not contained in $H$ and the $21$ $C_4$. This gives the upper bound $\rho(G) \leq 50$ and proves the result.

\ 

\item $\rho(\PSL_2(9)) = 82=9^2+1$. Let $G=\PSL_2(9) \cong A_6$. The group $G$ has order $360=2^3 \cdot 3^2 \cdot 5$ and contains $45$ elements of order $2$, $80$ elements of order $3$, $90$ elements of order $4$ and $144$ elements of order $5$. Every maximal subgroup of $G$ is isomorphic to one of $S_4$, $A_5$ and $(C_3 \times C_3) \rtimes C_4$. Let $\mathscr{P}$ be a partition of $G$. If $\mathscr{P}$ contains an $A_5$ then each member of $\mathscr{P}$ has order at most $|G|/|A_5|=6$ so all the cyclic subgroups of order $4$ belong to $\mathscr{P}$; since $A_5$ contains elements of order $2$ this exceeds the number of elements of order $2$ in $G$. This shows that $\mathscr{P}$ does not contain any $A_5$. If $\mathscr{P}$ contains a $S_4$ then any other member of $\mathscr{P}$ has order at most $|G|/|S_4|=15$ so to cover the elements of order $4$ we need to use subgroups of type $C_4$ or $D_8$. Observe that $\mathscr{P}$ cannot contain any subgroup of type $D_8$ because every element of $G$ of order $2$ is the square of some element of order $4$ and $D_8$ contains elements of order $2$ that are not the square of any of its elements of order $4$. Since $S_4$ contains $6$ elements of order $4$, we need to cover the remaining $90-6=84$ elements of order $4$ using $84/2=42$ cyclic subgroups of order $4$, however each of them contains one element of order $2$ and together with the $9$ elements of order $2$ contained in $S_4$ this exceeds the total number of elements of order $2$ in $G$. This implies that no member of $\mathscr{P}$ is maximal of type $S_4$. If $\mathscr{P}$ does not contain any point stabilizer $(C_3 \times C_3) \rtimes C_4$ then any member of $\mathscr{P}$ contains at most $2$ elements of order $4$, this gives at least $90/2=45$ subgroups, furthermore since the $A_5$'s are not in $\mathscr{P}$ and the cyclic subgroups of order $4$ cover the elements of order $2$ we need all the $144/4=36$ cyclic subgroups of order $5$ to cover the elements of order $5$, this gives $45+36=81$ subgroups, which do not form a cover because they do not contain any element of order $3$. This implies that some $H \cong (C_3 \times C_3) \rtimes C_4$ belongs to $\mathscr{P}$ and any other member of $\mathscr{P}$ has order at most $|G|/36=10$, so $\mathscr{P}$ contains $(90-18)/2=36$ subgroups of type $C_4$ ($D_8$ is excluded by the above argument). We need additional $144/4=36$ subgroups to cover the elements of order $5$ and all of the $9$ Sylow $3$-subgroups of $G$ not contained in $H$, this gives $|\mathscr{P}| \geq 1+36+36+9=82$. On the other hand a partition of $G$ consists of one ${C_3}^2 \rtimes C_4$ (call it $H$), the $36$ $C_4$ not contained in $H$, the $36$ $C_5$ and the $9$ ${C_3}^2$ not contained in $H$, so $\rho(G) \leq 82$ proving the result.

\ 

\item $\rho(\PSL_2(11)) = 122=11^2+1$. Let $G=\PSL_2(11)$. The group $G$ has order $660 = 2^2 \cdot 3 \cdot 5 \cdot 11$ and it contains $55$ elements of order $2$, $110$ elements of order $3$, $264$ elements of order $5$, $110$ elements of order $6$ and $120$ elements of order $11$. Moreover every element of order $2$ is the third power of some element of order $6$, and every element of order $3$ is the square of some element of order $6$. The maximal subgroups of $G$ are isomorphic to one of $A_5$, $C_{11} \rtimes C_5$ and $D_{12}$. Let $\mathscr{P}$ be a partition of $G$. The only subgroups containing elements of order $6$ are of type $C_6$ or $D_{12}$, however every element of $G$ of order $2$ is the third power of some element of order $6$ and $D_{12}$ contains elements of order $2$ that are not the third power of any of its elements of order $6$, so the subgroups of type $D_{12}$ cannot belong to any partition. In particular $\mathscr{P}$ contains all the $55$ cyclic subgroups of order $6$, which partition the elements of order $2$, $3$ and $6$. We now need to cover the elements of order $5$ and $11$ using subgroups of order coprime to $6$ (given that the elements of order $2$ and $3$ are already partitioned). If $\mathscr{P}$ contains a subgroup $H$ isomorphic to $C_{11} \rtimes C_5$ then every other subgroup in $\mathscr{P}$ has order at most $|G|/55=12$ so $\mathscr{P}$ must contain all the subgroups of order $5$ and $11$ outside $H$, giving a total of $|\mathscr{P}|=55+1+55+11=122$. This also shows that $\rho(G) \leq 122$. If $\mathscr{P}$ does not contain any $C_{11} \rtimes C_5$ then it contains all the subgroups of order $5$ and $11$ giving a total of $|\mathscr{P}|=55+66+12=133$. This proves that $\rho(G) \geq 122$.

\end{itemize}

\section{The Suzuki groups} \label{sectsuz}

Let $G$ be the Suzuki group $Sz(q)$ where $q=2^{2m+1}$. We will prove that $\rho(G) > \sigma(G)$. We have (see \cite{lucido}) $$\sigma(G) = \frac{1}{2} q^2(q^2+1).$$The order of $G$ is $|G|=q^2(q-1)(q^2+1)$. Let $U$ be the subgroup of $G$ consisting of the unitriangular matrices, it is a Sylow $2$-subgroup of $G$ and $|U|=q^2$, $\exp(U)=4$. Let $H$ be the subgroup of $G$ consisting of the diagonal matrices. Then $H$ is isomorphic to $\mathbb{F}_q^{\ast}$ therefore $|H|=q-1$. Let $r=2^{m+1}$, so that $(q+r+1)(q-r+1)=q^2+1$. Let $T_1$ be a cyclic maximal torus of order $q+r+1$ and let $T_2$ be a cyclic maximal torus of order $q-r+1$. The set $\Psi$ consisting of all the conjugates of $U$, $H$, $T_1$, $T_2$ is a partition of $G$. The only maximal subgroup of $G$ containing $T_i$ is $N_i=N_G(T_i)=T_i \langle t_i \rangle$ for $i=1,2$, where $t_i$ is an element of order $4$ and $|N_i:T_i|=4$ for $i=1,2$. The subgroup $B=UH$ is a semidirect product between $U$ and $H$, and has the structure of Frobenius group with Frobenius kernel $U$ and Frobenius complement $H$. It is a maximal subgroup of $G$ of order $q^2(q-1)$ and $H$ is self-normalized in $B$, in particular $B$ contains $q^2$ conjugates of $H$. The intersection between any two conjugates of $B$ is a conjugate of $H$. Moreover the permutation action of $G$ with point stabilizer $B$ is $2$-transitive of degree $q^2+1$. M. S. Lucido \cite{lucido} proved that $$\mathscr{P} = \{{N_1}^g,{N_2}^g,B^x\ :\ g \in G,\ x \in G-B\}$$ is a minimal cover of $G$ consisting of maximal subgroups.

\ 

Let $\mathscr{P}$ be a partition of $G$. Observe that the only maximal subgroups containing conjugates ${T_i}^x$ of $T_i$ (for $i=1,2$) are the normalizers $N_G({T_i}^x)$, which contain ${T_i}^x$ as a subgroup of index $4$, so any two distinct subgroups of the form ${T_i}^x$ generate $G$. Since $T_i$ is cyclic, this proves that to cover these conjugates we need at least $$|G:N_G(T_1)|+|G:N_G(T_2)| = \frac{|G|}{4(q+r+1)}+\frac{|G|}{4(q-r+1)} = \frac{1}{2} q^2(q^2-1)$$ subgroups in the partition $\mathscr{P}$. Since $|H|$ is odd, and the conjugates of $T_i$ have trivial intersection with the conjugates of $H$, their normalizers also have trivial intersection with the conjugates of $H$. The only maximal subgroups containing conjugates $H^x$ are the corresponding Borel subgroups $B^x$, where $B=UH$, and the normalizers $N_G(H^x)$, in which $H^x$ has index $2$. In the partition $\mathscr{P}$ there is at most one conjugate of $B$ because any two conjugates of $B$ intersect in a conjugate of $H$. Let $K \in \mathscr{P}$, not a conjugate of $B$, which contains a conjugate $H^x$ of $H$. If $K$ contains more than one conjugate of $H$ then it is contained in a conjugate $B_0$ of $B$, which is a Frobenius group, hence $|B_0:K| > 2$, indeed if $|B_0:K|=2$ then $K$ would be normal in $B_0$, and this contradicts the fact that any normal subgroup of a Frobenius group either contains or is contained in the Frobenius kernel. Since $H^x$ is self-normalized in $B_0$, it is self-normalized in $K$ too hence $K$ contains at most $q^2/4$ conjugates of $H$. If $H^x \unlhd K$ then of course $K$ contains at most $q^2/4$ conjugates of $H$ as well. Call $\ell$ the number of subgroups in $\mathscr{P}$ containing conjugates of $H$. Since $H$ has $q^2(q^2+1)/2$ conjugates in $G$ we must have $$q^2+(\ell-1) \frac{q^2}{4} \geq \frac{1}{2} q^2(q^2+1)$$ hence $\ell \geq 2q^2-1$. We deduce that $$|\mathscr{P}| \geq \frac{1}{2} q^2(q^2-1)+2q^2-1 = \sigma(G)+q^2-1$$ therefore $\rho(G) > \sigma(G)$.

\end{document}